\crefname{equation}{Eq.}{Eqs.}
\crefname{subsection}{Subsection}{Subsections}
\theoremstyle{definition}
\newtheorem{theorem}{Theorem}[section]
\newtheorem{definition}[theorem]{Definition}
\newtheorem{lemma}[theorem]{Lemma}
\newtheorem{proposition}[theorem]{Proposition}
\newtheorem{problem}[theorem]{Problem}
\crefname{problem}{Problem}{Problems}
\newtheorem{example}[theorem]{Example}
\newtheorem{corollary}[theorem]{Corollary}
\newtheorem{remark}{Remark}
\renewcommand{\restriction}{\mathord{\upharpoonright}}
\newcommand{\s}{_{*}}
\newcommand{\iso}{\cong}
\newcommand{\intv}{\mathbb{I}}
\renewcommand{\Im}{\mathop{\mathrm{Im}}\nolimits}
\DeclareMathOperator{\Gr}{Gr}
\DeclareMathOperator{\dom}{dom}
\DeclareMathOperator{\rep}{rep}
\DeclareMathOperator{\Ker}{Ker}
\DeclareMathOperator{\Hom}{Hom}
\DeclareMathOperator{\id}{id}
\DeclareMathOperator{\arr}{arr}
\newcommand{\rn}{$\mathbb{R}$}
\newcommand{\rpm}{\mathsf{vect}^\mathbb{R}}
\newcommand{\reltoeq}{\trianglerighteq}
\newcommand{\relto}{\vartriangleright}
\newcommand{\n}{\emptyset}
\renewcommand{\*}{*}
\def\Int #1{\expandafter\Int@i#1\@nil}
\def\Int@i #1,#2\@nil{{#1{:}#2}}
\def\IntC #1{\expandafter\IntC@i#1\@nil}
\def\IntC@i #1,#2\@nil{{#1{,}#2}}
\def\itoi #1{\expandafter\itoi@i#1\@nil}
\def\itoi@i #1,#2,#3,#4\@nil{_{\Int{#1,#2}}^{\Int{#3,#4}}}
\def\tikz@delimiter#1#2#3#4#5#6#7#8{%
  \bgroup
    \pgfextra{\let\tikz@save@last@fig@name=\tikz@last@fig@name}%
    node[outer sep=0pt,inner sep=0pt,draw=none,fill=none,anchor=#1,at=(\tikz@last@fig@name.#2),#3]
    {%
      {\nullfont\pgf@process{\pgfpointdiff{\pgfpointanchor{\tikz@last@fig@name}{#4}}{\pgfpointanchor{\tikz@last@fig@name}{#5}}}}%
      \delimitershortfall\z@
      \resizebox*{!}{#8}{$\left#6\vcenter{\hrule height .5#8 depth .5#8 width0pt}\right#7$}%
    }
    \pgfextra{\global\let\tikz@last@fig@name=\tikz@save@last@fig@name}%
  \egroup%
}
\newcommand{\mattikz}[1]{
   \begin{tikzpicture}[
      baseline = (p.center), 
      ampersand replacement=\&,
      decoration={
        markings,
        mark=
        at position 0.5
        with{
          \draw[-] (-2pt,-2pt) -- (2pt,2pt);
          \draw[-] (2pt,-2pt) -- (-2pt,2pt);
        }
      }]
      {#1}
    \end{tikzpicture}
}
\newcommand{\matheader}{
  \matrix[matrix of math nodes, column sep=2mm, row sep=2mm,
  inner sep = 0mm, left delimiter = {[},right delimiter = {]},
  every node/.append style={
    anchor=center,text depth = 0.375em,text
    height=0.875em,minimum width=1.25em}](p)
}
\newcommand{\rowarrow}[2]{
  \draw[->, red, postaction={decorate}] #1  to[out=330,in=30] #2;
}
\newcommand{\colarrow}[2]{
  \draw[->, red, postaction={decorate}] #1 to[out=240,in=300] #2;
}
\title{The persistent homology of a sampled map}
\author{Hiroshi Takeuchi\thanks{
Chubu University. 1200 Matsumoto-cho, Kasugai, Aichi 487-8501, Japan. \newline
e-mail: hiroshi\_takeuchi@isc.chubu.ac.jp \newline
\href{https://orcid.org/0000-0001-8695-1883}{orcid.org/0000-0001-8695-1883}
}}
\date{\today}
\begin{document}
\maketitle
\begin{abstract}
This paper aims to introduce a filtration analysis of sampled maps based on persistent homology, providing a new method for reconstructing the underlying maps.
The key idea is to extend the definition of homology induced maps of correspondences using the framework of quiver representations.
Our definition of homology induced maps is given by most persistent direct summands of representations, and the direct summands uniquely determine a persistent homology.
We provide stability theorems of this process and show that the output persistent homology of the sampled map is the same as that of the underlying map if the sample is dense enough.
Compared to existing methods using eigenspace functors, our filtration analysis has an advantage that no prior information on the eigenvalues of the underlying map is required.
Some numerical examples are given to illustrate the effectiveness of our method.
\end{abstract}
\section{Introduction}
Consider the following problem.
\begin{problem} \label{problem:correspondence}
Let $X$ and $Y$ be topological spaces, and $f\colon X \to Y$ be a continuous map.
If we know only $X$, $Y$, and sampling data $f\restriction_S$, which is a restriction of $f$ on a finite subset $S\subset X$, then can we retrieve any information about the homology induced map $f_*\colon HX \to HY$?
\end{problem}

The map $f\restriction_S$ is called a \emph{sampled map} of $f$.
This paper is motivated by~\cite{correspondence}, which suggests the following analysis for sampled maps.
A \emph{grid $\mathcal{X}$} of $X$ is a finite collection of subsets of $X$ with disjoint interiors such that $\bigcup \mathcal{X} := \bigcup_{X' \in \mathcal{X}} X' = X$.
First, we divide the topological spaces $X$ and $Y$ into grids $\mathcal{X}$ and $\mathcal{Y}$, and let $F$ be the union of regions which have elements of the sample $\Gr (f\restriction_S) := \set{(s, f(s)) | s \in S}$.
That is,
    \begin{equation}
	F:=\set{(x, y) \in X \times Y | \text{$x \in \exists X' \in \mathcal{X}$, $y \in \exists Y' \in \mathcal{Y}$, $(X' \times Y') \cap \Gr (f\restriction_S) \neq \emptyset$}},
\end{equation}
the purple regions in \cref{figure:partition}.
The set $F$ is an approximation of the graph $\Gr(f)$ from the sampled map by this subspace, which is called a correspondence.

\begin{figure}[t]
  \begin{minipage}[t]{0.45\hsize}
  	\centering
  	\includegraphics[width=\hsize]{./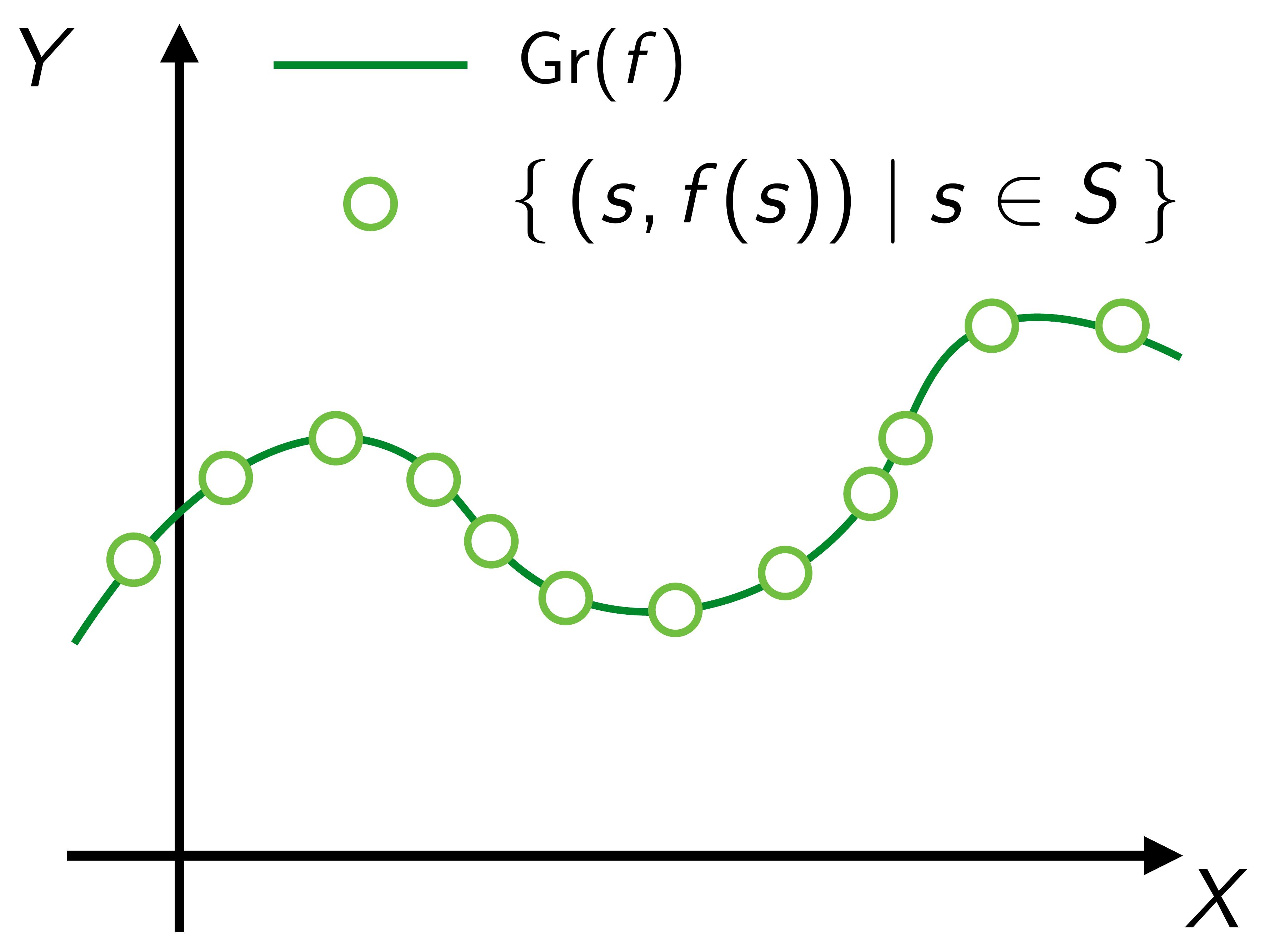}
  	\caption{The graph $\Gr(f)$ of $f$ and the graph of the sampled map.} \label{figure:graph}
  \end{minipage}
  \begin{minipage}[t]{0.05\hsize}
      \hspace{\hsize}
  \end{minipage}
  \begin{minipage}[t]{0.45\hsize}
  	\centering
  	\includegraphics[width=\hsize]{./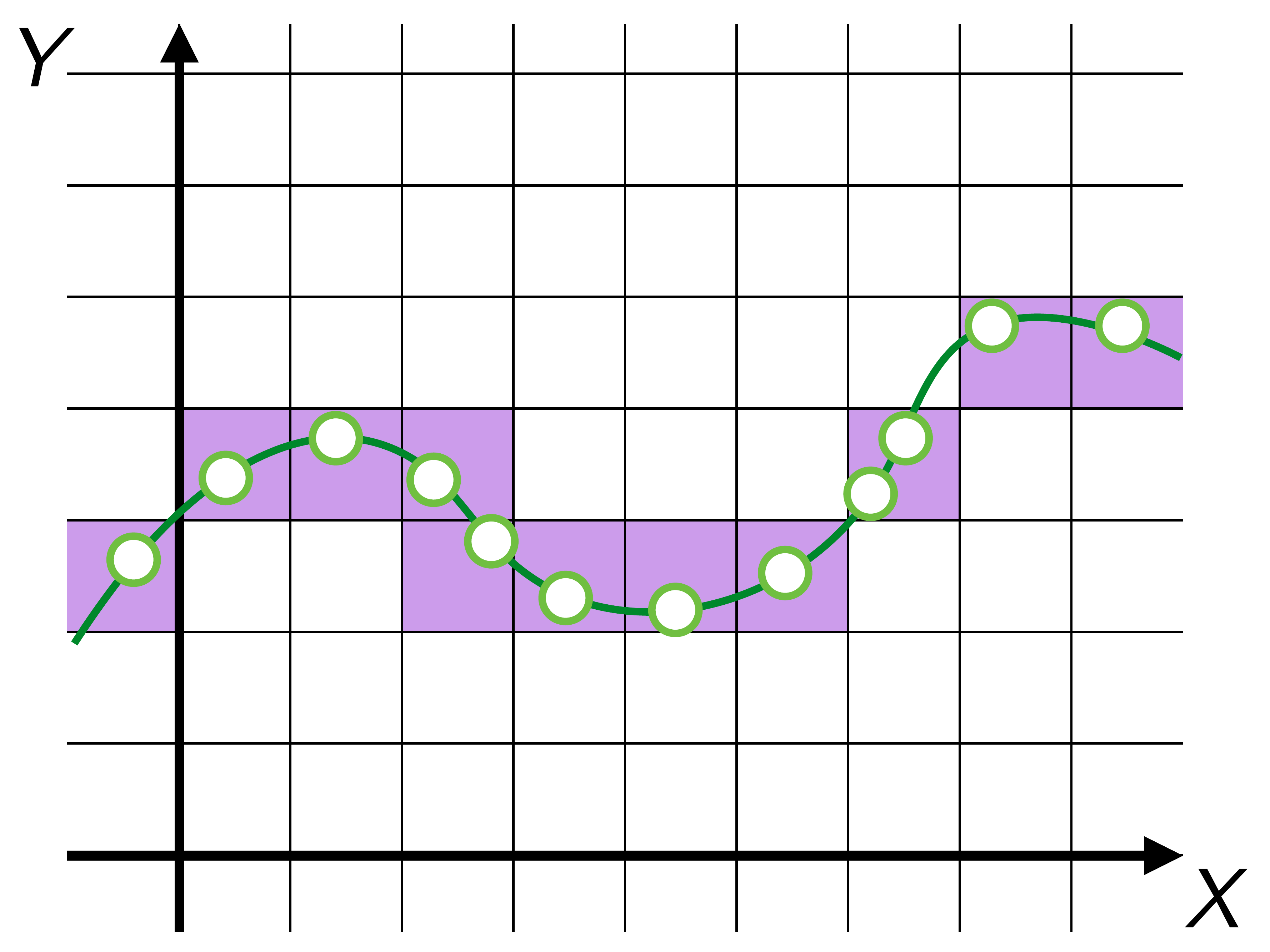}
  	\caption{Both spaces are divided into grids and get a correspondence $F$, which approximates the graph $\Gr(f)$.} \label{figure:partition}
  \end{minipage}
\end{figure}

\begin{definition}
	A \emph{correspondence $F$} from $X$ to $Y$ is a subspace of $X \times Y$.
\end{definition}

\begin{definition} \label{definition:original_induced_map}
	For a correspondence $F$, let $p \colon F \to X$ and $q \colon F \to Y$ be canonical projections, and $p\s \colon HF \to HX$ and $q\s \colon HF \to HY$ be their homology induced maps.
	If $p\s$ and $q\s$ satisfy two properties
	\begin{itemize}
	\item $\Im p\s = HX$ (\emph{homologically complete})
	\item $q\s(\Ker p\s) = 0$ (\emph{homologically consistent}),
	\end{itemize}
	then the \emph{induced map} of $F$ is defined by $F\s := q\s \circ p\s^{-1} \colon HX \to HY$, and is well-defined.
\end{definition}

The graph $\Gr(f)$ of $f$ is a correspondence.
Since $f$ is continuous, $p \colon \Gr(f) \to X$ is homeomorphic.
As a consequence, $p\s$ and $q\s$ for $\Gr(f)$ satisfy homologically completeness and homologically consistency, hence $\Gr(f)\s$ is well-defined.
We remark that this induced map $\Gr(f)\s$ coincides with $f\s$.
The following theorem guarantees that $F\s$ restores $f\s$ when the grid is fine and the sample is dense enough.
\begin{theorem}[{\cite[Theorem 3.10]{correspondence}}]
  \label{theorem:correspondence_graph}
	If a correspondence $F$ satisfies $\Gr(f) \subset F$ and is homologically consistent, then $F\s$ is well-defined and $f\s = F\s$.
\end{theorem}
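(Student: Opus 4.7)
My plan is to exploit the inclusion $\Gr(f) \hookrightarrow F$ and reduce everything to the fact, already noted in the excerpt, that the projection $\Gr(f) \to X$ is a homeomorphism.

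Write $i\colon \Gr(f) \hookrightarrow F$ for the inclusion, and let $p_f\colon \Gr(f) \to X$ and $q_f\colon \Gr(f) \to Y$ denote the canonical projections from the graph. Then the diagrams $p \circ i = p_f$ and $q \circ i = q_f$ commute, so after applying $H$ we get $p\s \circ i\s = (p_f)\s$ and $q\s \circ i\s = (q_f)\s$. Because $f$ is continuous, $p_f$ is a homeomorphism, hence $(p_f)\s$ is an isomorphism, and we already know $f\s = (q_f)\s \circ (p_f)\s^{-1}$.

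First I would verify homological completeness of $F$. Given $a \in HX$, set $b := (p_f)\s^{-1}(a) \in H\Gr(f)$; then $p\s(i\s b) = (p_f)\s(b) = a$, so $p\s$ is surjective. Together with the hypothesis that $F$ is homologically consistent, this shows $F\s = q\s \circ p\s^{-1}$ is well-defined as a map $HX \to HY$ in the sense of \cref{definition:original_induced_map}.

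Next I would compare $F\s$ with $f\s$ by a short diagram chase. Fix $a \in HX$ and pick any $c \in HF$ with $p\s(c) = a$; by definition $F\s(a) = q\s(c)$. With $b := (p_f)\s^{-1}(a)$ as above, both $c$ and $i\s b$ map to $a$ under $p\s$, so $c - i\s b \in \Ker p\s$. Homological consistency then forces $q\s(c - i\s b) = 0$, whence
\begin{equation}
  F\s(a) = q\s(c) = q\s(i\s b) = (q_f)\s(b) = (q_f)\s \circ (p_f)\s^{-1}(a) = f\s(a).
\end{equation}
There is no real obstacle here beyond setting up the inclusion $\Gr(f) \hookrightarrow F$ correctly; once the two triangles of projections commute, both well-definedness and the identity $F\s = f\s$ follow from a single application of homological consistency.
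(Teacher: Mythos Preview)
Your argument is correct. Note, however, that the paper does not supply its own proof of this statement: \cref{theorem:correspondence_graph} is quoted verbatim from \cite[Theorem~3.10]{correspondence}, so there is no in-paper proof to compare against. Your proof is a clean, self-contained version of the standard argument: factoring the projections through the inclusion $i\colon \Gr(f)\hookrightarrow F$, using that $(p_f)\s$ is an isomorphism to obtain homological completeness, and then invoking homological consistency once to eliminate the ambiguity $c - i\s b \in \Ker p\s$. This is essentially how the result is proved in the cited reference as well, so nothing is missing or unorthodox in your approach.
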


In \cref{section:induced_maps}, we give a new definition of induced maps of correspondences within the framework of quiver representations.
We will see that the indecomposable decompositions of quiver representations give us an assignment among the bases of $HX$, $HF$, and $HY$, which defines the induced map from $HX$ to $HY$.

The paper~\cite{dynamical_system} gives a way to analyze the eigenspaces of the homology induced map of a self-map (discrete dynamical system).
In this analysis, the authors construct a filtration of simplicial maps from the sampled map and build its persistent homology by applying the homology functor and eigenspace functors.
This construction of the filtration and the new definition of homology induced maps provide a further technique that enables another persistence analysis of sampled maps, shown in \cref{section:persistence_sampled_maps}.
Specifically, the assignment among the bases can compress the three persistent homologies generated by the finite sets $S$, $f\restriction_S$, and $f(S)$, yielding a persistent homology which describes the persistence of the topological mapping from the domain to the image of the induced map $f\s$.
Moreover, we can provide such a persistence analysis also in the above gridded setting, as mentioned in \cref{subsection:grid_filtration}.

The main theorem of this paper is a stability theorem for these processes, \cref{theorem:stability_cech,theorem:stability_grid} in \cref{section:stability_analysis}, which state that these mappings from the input (sampled maps) to output (persistent homology or persistence diagrams) are non-expanding maps.

Finally, we approach 2-D persistence modules in \cref{section:Funtoriality_Other_Intervals} using the above ideas, and show some numerical results in \cref{section:numerical_example}.

\section{Preliminaries}
In this section, we introduce quiver representations and matrix notation for morphisms between $A_n$ type representations.
For more details, the reader can refer to \cite{blue_book} and \cite{matrix_method}, respectively.
\subsection{Quivers and their representations}
Throughout this paper, scalars of vector spaces and coefficient rings of homology groups are a fixed field $K$.
A \emph{quiver} $Q = (Q_0, Q_1, s, t)$ (or simply $(Q_0, Q_1)$) is a directed graph with a set of vertices $Q_0$, a set of arrows $Q_1$, and morphisms $s, t\colon Q_1 \to Q_0$ identifying the source and the target vertex of an arrow.
An arrow $\alpha \in Q_1$ is denoted by $\alpha \colon s(\alpha) \to t(\alpha)$.
A \emph{representation of a quiver $Q$}, denoted $M = (M_a, \varphi_{\alpha})_{a \in Q_0, \alpha \in Q_1}$ (or simply $(M_a, \varphi_{\alpha})$ or $(M,\varphi)$), is a collection of a (finite dimensional) vector space $M_a$ for each vertex $a \in Q_0$ and a linear map $\varphi_{\alpha} \colon M_a \to M_b$ for each arrow $\alpha \colon a \to b \in Q_1$.

A \emph{morphism} from $M = (M_a, \varphi_{\alpha})$ to $M' = (M'_a, \varphi'_{\alpha})$ is defined by 
\begin{equation}
  f:=\set{f_a \colon M_a \to M'_a | a \in Q_0} \colon M \to M'  
\end{equation}
with commutativity
\begin{equation}
  \forall \alpha \colon a \to b \in Q_1,\quad
  \begin{tikzcd}
    M_a \rar["\varphi_\alpha"]  \dar["f_a"] & M_b \dar["f_b"] \\
    M'_a \rar["\varphi'_\alpha"] & M'_b
  \end{tikzcd}.
\end{equation}
The composition of morphisms $f = \set{f_a} \colon M \to M'$ and $g = \set{g_a} \colon M' \to M''$ is $gf = \set{g_a f_a}\colon M \to M''$.

These definitions determine an additive category of representations $\rep(Q)$.
Specifically, $\rep(Q)$ has a zero representation, isomorphisms of representations, and direct sums of representations.
One can see the concrete construction of these in~\cite{blue_book}.

A representation $M$ is \emph{indecomposable} if $M \iso N \oplus N'$ implies $N = 0$ or $N' = 0$.
From the Krull-Remak-Schmidt theorem, every representation $M$ can be uniquely decomposed into a direct sum of indecomposables $M \iso N_1 \oplus \dots \oplus N_s$, unique up to isomorphism and permutations.

A quiver $Q$ is of \emph{finite type} if the number of distinct isomorphism classes of indecomposables is finite, and is of \emph{infinite type} otherwise.

\emph{$A_n(\tau_n)$ type quivers} (or simply \emph{$A_n$ type quivers}) are a class of quivers with the following shape:
\begin{equation}
	A_n(\tau_n):
	\begin{tikzcd}
		\overset{1}{\circ} \rar[leftrightarrow] & \overset{2}{\circ} \rar[leftrightarrow] & \cdots \rar[leftrightarrow] & \overset{n}{\circ}
	\end{tikzcd}
\end{equation}
where $\longleftrightarrow$ denotes a forward arrow $\longrightarrow$ or backward arrow $\longleftarrow$, and $\tau_n$ is a sequence of $n - 1$ symbols $f$ and $b$ which determine the orientation of the arrows.
From Gabriel's theorem~\cite{Gabriel}, every $A_n$ type representation
\begin{equation}
	\begin{tikzcd}
		M : M_1 \rar[leftrightarrow] & M_2 \rar[leftrightarrow] & \cdots \rar[leftrightarrow] & M_n
	\end{tikzcd}
\end{equation}
can be uniquely decomposed into a direct sum of indecomposable \emph{interval representations}
  \begin{align}
	&M \iso \bigoplus_{1 \leq b \leq d \leq n} \intv [b, d]^{m_{b, d}}\quad (m_{b, d} \in \mathbb{Z}_{\geq0}\text{: multiplicity}),\label{eq:decomposition}\\
	\intv [b, d] : 0 \longleftrightarrow &\cdots \longleftrightarrow 0 \longleftrightarrow \overset{b\text{-th}}{K} \overset{\id_K}{\longleftrightarrow} K \overset{\id_K}{\longleftrightarrow} \cdots \overset{\id_K}{\longleftrightarrow} \overset{d\text{-th}}{K} \longleftrightarrow 0 \longleftrightarrow \cdots \longleftrightarrow 0.
\end{align}

In topological data analysis, a central role is played by persistent homology.
The homology of a filtration of simplicial complexes
\begin{equation}
	HX : HX_1 \rightarrow HX_2 \rightarrow \cdots \rightarrow HX_n
\end{equation}
can be regarded as a representation of an $A_n(ff \cdots f)$ type quiver in the framework of quiver representations.
Each interval representation $\intv[b, d]$ corresponds to a generator of a homology group which is born at $HX_b$ and dies at $HX_{d + 1}$, and $d - b$ is called its \emph{lifetime}.
The \emph{persistence diagram} is a multiset determined by the unique decomposition of the persistent homology
\begin{equation}
	D_M = \set{(b, d) | 1 \leq b \leq d \leq n, (b, d) \text{ has multiplicity } m_{b, d}},
\end{equation}
or an image made by plotting this on a plane.
This description allows us to overview the generators of all homology groups, and hence this approach is frequently used for the application of persistent homology.

The framework of quiver representations has extended persistent homology to general representations of quivers.
We call representations of quivers \emph{persistence modules}.
Zigzag persistence modules~\cite{zigzag} are a typical example of the extension, which enable persistence analysis of time series data.

Consider deformations of topological spaces $(X_1, \ldots, X_T)$, a sequence of topological spaces.
The zigzag persistence of the sequence is the representation of an $A_{2T-1}(fbfb\cdots fb)$ type quiver
\begin{equation}
  H(X_1)\to H(X_1 \cup X_2) \gets H(X_2) \to \cdots \to H(X_{T-1} \cup X_T) \gets HX_T
\end{equation}
composed by the unions of neighboring spaces and their canonical inclusions.
The decomposition of the zigzag persistence module as a representation yields a persistence diagram again, where each interval captures the persistence of a homology generator in the deformations of spaces.

\subsection{Matrix notation for morphisms in $\rep(A_n)$}
The paper~\cite{matrix_method} established a new matrix notation for morphisms in $\rep(A_n)$ in the following way.
This notation will give a clear perspective when arguing the well-definedness of persistence analysis in \cref{section:persistence_sampled_maps}.

\begin{definition}[{\cite[Definition 3]{matrix_method}}]
  The \emph{arrow category $\arr(\rep(Q))$} of the category $\rep(Q)$ is a category whose objects are all morphisms in $\rep(Q)$, where morphisms are defined as follows.
  For two objects $f \colon M \to N$ and $f' \colon M' \to N'$ in this category, an morphism from $f$ to $g$ is a pair $(F_M, F_N)$ of morphisms $F_M \colon M \to M'$ and $F_N \colon N \to N'$ satisfying $F_N f = f' F_M$.
  The composition of morphisms $(F_M, F_N)\colon f \to f'$ and $(G_M, G_N) \colon f' \to f''$ is defined by $(G_M F_M, G_N F_N) \colon f \to f''$.
\end{definition}

We remark that every morphism $\varphi \colon V \to W$ between representations of $\rep(A_n)$ is isomorphic to a morphism between direct sums of interval representations
\begin{equation}
  \Phi := \eta_{W}\varphi\eta_V^{-1} \colon \bigoplus_{1 \leq b \leq d \leq n} \intv [b, d]^{m_{b, d}} \to \bigoplus_{1 \leq b \leq d \leq n} \intv [b, d]^{m'_{b, d}}
\end{equation}
in $\arr(\rep(A_n))$, where
\begin{equation}
  \eta_V \colon V \iso \bigoplus_{1 \leq b \leq d \leq n} \intv [b, d]^{m_{b, d}} \text{ and } \eta_W \colon W \iso \bigoplus_{1 \leq b \leq d \leq n} \intv [b, d]^{m'_{b, d}}
\end{equation}
are indecomposable decompositions.
By the following lemma, the morphism $\Phi$ can be written down in a matrix form.
\begin{definition}[{\cite[Definition 4]{matrix_method}}]
  The relation $\reltoeq$ is defined on the set of interval representations of $A_n(\tau_n)$, $\set{\intv[b,d] | 1 \leq b \leq d \leq n}$, by setting $\intv[a,b] \reltoeq \intv[c,d]$ if and only if $\Hom(\intv[a,b],\intv[c,d])$ is nonzero.
  We write $\intv[a,b] \relto \intv[c,d]$ if $\intv[a,b] \reltoeq \intv[c,d]$ and $\intv[a,b] \neq \intv[c,d]$.
\end{definition}
\begin{lemma}[{\cite[Lemma 1]{matrix_method}}]\label{lemma:dimhom}
  Let $\intv[a,b]$ and $\intv[c,d]$ be interval representations of $A_n(\tau_n)$.
  \begin{enumerate}
    \item The dimension of $\Hom(\intv[a,b],\intv[c,d])$ as a $K$-vector space is either 0 or 1.
    \item A $K$-vector space basis $\set{f_{a:b}^{c:d}}$ can be chosen for each $\Hom(\intv[a,b],\intv[c,d])$ such that if $\intv[a,b] \reltoeq \intv[c,d]$, $\intv[c,d] \reltoeq \intv[e,f]$ and $\intv[a,b] \reltoeq \intv[e,f]$, then
    \begin{equation}
      f_{a:b}^{e:f} = f_{c:d}^{e:f}f_{a:b}^{c:d}.
    \end{equation}
  \end{enumerate}
\end{lemma}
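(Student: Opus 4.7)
The plan is to analyze a morphism $\varphi \colon \intv[a,b] \to \intv[c,d]$ vertex by vertex. At each vertex $i$, the component $\varphi_i$ maps $\intv[a,b]_i$ to $\intv[c,d]_i$; each of these spaces is either $K$ or $0$. Hence $\varphi_i$ must be $0$ unless $i \in [a,b] \cap [c,d]$, and on that overlap it is described by a single scalar $\lambda_i \in K$. The first step is to record these scalars as the data of $\varphi$.

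For part (1), I will propagate the scalars along the arrows inside the overlap. Every interior arrow of $\intv[a,b]$ and $\intv[c,d]$, regardless of the orientation $\tau_n$, is the identity of $K$, so the commutativity square at such an arrow yields $\lambda_i = \lambda_{i+1}$. Because the overlap is itself an interval of positions, all the $\lambda_i$ coincide with a single value $\lambda$. At the two endpoints of the overlap, one side of the commutativity square still carries the identity of $K$ while the other side carries a zero map; the orientation of that boundary arrow then either forces $\lambda = 0$ or imposes no condition at all. In either case, $\Hom(\intv[a,b], \intv[c,d])$ is either a line parametrised by the common $\lambda$ or is trivial, giving the required dimension bound.

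For part (2), I will choose $f_{a:b}^{c:d}$ to be the morphism with the distinguished value $\lambda = 1$ whenever $\intv[a,b] \reltoeq \intv[c,d]$. Under the three hypotheses, $\Hom(\intv[a,b], \intv[e,f])$ is one-dimensional, so the composition $f_{c:d}^{e:f} \circ f_{a:b}^{c:d}$ must be a scalar multiple $\mu \cdot f_{a:b}^{e:f}$. Evaluating this composition at any vertex $i$ in the triple overlap $[a,b] \cap [c,d] \cap [e,f]$ yields $\mu = 1 \cdot 1 = 1$, so the two morphisms coincide.

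The main obstacle will be verifying that the composition is actually nonzero, equivalently that the triple overlap is inhabited and that no endpoint constraint collapses it. This has to be extracted from the three bilateral $\reltoeq$-assumptions, each of which simultaneously controls overlap of supports and orientation compatibility at the boundary arrows. I expect that a short case analysis on the relative positions of the three intervals with respect to the orientation $\tau_n$ at the outermost boundary arrows will suffice, but this is precisely where the specific shape of $\tau_n$ enters the argument and some bookkeeping is unavoidable.
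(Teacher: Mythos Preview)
The paper does not actually prove this lemma; it is quoted from \cite{matrix_method}, and immediately afterwards the paper only records the explicit candidate basis element (identity on $[a,b]\cap[c,d]$, zero elsewhere). Your sketch is a correct and standard argument, and the basis you select with $\lambda=1$ is exactly the paper's candidate.

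The obstacle you flag in part~(2) is lighter than you anticipate. Each hypothesis $\intv[a,b]\reltoeq\intv[c,d]$ already forces the pairwise overlap $[a,b]\cap[c,d]$ to be nonempty (otherwise every component of a morphism is zero). Three pairwise-intersecting intervals on the integer line automatically share a common vertex: if, say, $e=\max(a,c,e)$, then the two pairwise overlaps with $[e,f]$ give $e\le b$ and $e\le d$, so $e\in[a,b]\cap[c,d]\cap[e,f]$. At any such vertex both factors of the composition and $f_{a:b}^{e:f}$ itself are $\id_K$, whence $\mu=1$ directly. No case analysis on $\tau_n$ is needed here; the orientation data was already absorbed in part~(1) when deciding whether each $\Hom$ space is zero or one-dimensional, and the three $\reltoeq$ hypotheses simply assert that all three are one-dimensional.
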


We use the notation $[a,b] := \set{a, a+1, \ldots, b}$ over $\mathbb{Z}$.
A candidate for the basis is
\begin{equation}
  (f_{a:b}^{c:d})_i = 
  \begin{cases}
    \id_K &(i \in [a,b] \cap [c,d])\\
    0 &(\text{otherwise}),
  \end{cases}
\end{equation}
in the case that $\intv[a,b] \reltoeq \intv[c,d]$.
In the case that $\intv[a,b] \not\reltoeq \intv[c,d]$, we define $f_{a:b}^{c:d} = 0$ for convenience.
We fix this basis throughout this paper.

The morphism $\Phi$ can be written in block matrix form
\begin{equation}
  \Phi = \left[ \Phi_{a:b}^{c:d} \right],
\end{equation}
where each $\Phi_{a:b}^{c:d} \colon \intv[a,b]^{m_{a,b}} \to \intv[c,d]^{m_{c,d}'}$ is composition of the canonical inclusion $\iota$ with the canonical projection $\pi$:
\begin{equation}
  \begin{tikzcd}
    \intv[a,b]^{m_{a,b}} \rar["\iota"] & \bigoplus\limits_{1 \leq a \leq b \leq n}\intv[a,b]^{m_{a,b}} \rar["\Phi"] & \bigoplus\limits_{1 \leq c \leq d \leq n}\intv[c,d]^{m_{c,d}'} \rar["\pi"] & \intv[c,d]^{m_{c,d}'}.
  \end{tikzcd}
\end{equation}
In a similar way, each block $\Phi_{a:b}^{c:d}$ can also be written in matrix form with the entries in $\Hom(\intv[a,b],\intv[c,d])$:
\begin{equation}
  \Phi_{a:b}^{c:d} = \left[ \phi_j^i \right] \quad (\phi_j^i \in \Hom(\intv[a,b],\intv[c,d]), 1 \leq j \leq m_{a,b}, 1 \leq i \leq m'_{c,d}).
\end{equation}
For each relation $\intv[a,b] \reltoeq \intv[c,d]$, according to \cref{lemma:dimhom} and factoring out $f_{a:b}^{c:d}$ from each $\phi_j^i$, we can get $\phi_j^i = \mu_j^i f_{a:b}^{c:d}$ for some $\mu_j^i \in K$.
In a similar way, factoring out $f_{a:b}^{c:d}$ from each $\Phi_{a:b}^{c:d}$, we get
\begin{equation}
  \Phi_{a:b}^{c:d} = M_{a:b}^{c:d} f_{a:b}^{c:d}
\end{equation}
where each $M_{a:b}^{c:d}$ is a $m_{c,d} \times m_{a,b}$ matrix with the entries in $K$.

\begin{definition}\label{definition:block_matrix_form}
  Let $\varphi$ be a morphism in $\rep(A_n)$. The block matrix form $\Phi(\varphi)$ of $\varphi$ is
  \begin{equation}
    \Phi(\varphi) = \left[ \Phi_{a:b}^{c:d} \right] = \left[ M_{a:b}^{c:d} f_{a:b}^{c:d} \right]
  \end{equation}
\end{definition}

The paper~\cite{matrix_method} shows that isomorphisms in the arrow category $\arr(\rep(A_n))$ correspond to row and column operations in block matrix form.
These operations are performed by matrix multiplication with the same restriction, that is, the block of $\intv[a,b] \not\reltoeq \intv[c,d]$ must be always zero.
The column and row operations are almost the same as that of $K$-matrices, however because of the restriction, addition from a block to certain blocks is not permissible.

Let us discuss column operations.
We define a morphism $\Phi'$ such that the following diagram commutes:
\begin{equation}
  \begin{tikzcd}
    \bigoplus_{1 \leq b \leq d \leq n} \intv [b, d]^{m_{b, d}} \rar["\Phi"] & \bigoplus_{1 \leq b \leq d \leq n} \intv [b, d]^{m'_{b, d}} \\
    \bigoplus_{1 \leq b \leq d \leq n} \intv [b, d]^{m_{b, d}} \uar["\Theta"] \uar[',"\iso"] \urar["\Phi'"] &
  \end{tikzcd}
\end{equation}
where $\Theta$ is an isomorphism.
Namely, $\Phi$ and $\Phi'$ are isomorphic in the arrow category.
The morphism $\Theta$ is also a morphism between direct sums of interval representations, hence in the same way, $\Theta$ can also be written in block matrix form $\left[ C\itoi{a,b,c,d} f\itoi{a,b,c,d} \right]$.
The multiplication $\Phi \Theta$ denotes column operations on $\Phi$, and its block at column $a:b$ and row $c:d$ is
\begin{align}
    \left[ \Phi \Theta \right]\itoi{a,b,c,d} &= \sum_{\intv[a,b] \reltoeq \intv[e,f] \reltoeq \intv[c,d]} (M\itoi{e,f,c,d} f\itoi{e,f,c,d}) (C\itoi{a,b,e,f} f\itoi{a,b,e,f}) \\
   &= (\sum_{\intv[a,b] \reltoeq \intv[e,f] \reltoeq \intv[c,d]} M\itoi{e,f,c,d} C\itoi{a,b,e,f}) f\itoi{a,b,c,d}.
\end{align}

The difference from the usual column operations on $K$-matrices is the part of not only the morphism $f\itoi{a,b,c,d}$ but also the summation $\sum_{\intv[a,b] \reltoeq \intv[e,f] \reltoeq \intv[c,d]}$.
The existence of the morphism $f\itoi{a,b,c,d}$ just means that the block with $\intv[a,b] \not\reltoeq \intv[c,d]$ must always be zero even when added from the other part.
The summation $\sum_{\intv[a,b] \reltoeq \intv[e,f] \reltoeq \intv[c,d]}$ might be a bit more complicated.
It means that not all of the column operations are permissible, but only the following cases.
\begin{itemize}
  \item Elementary column operations (switching, multiplication, and addition) within the same interval is always permissible.
  \item Column addition to $\intv[a,b]$ from another interval $\intv[e,f]$ with relation $\intv[a,b] \relto \intv[e,f]$ is permissible.
\end{itemize}
Similar properties of permissibility hold for row operations.
\begin{itemize}
  \item Elementary row operations (switching, multiplication, and addition) within the same interval is always permissible.
  \item Row addition to $\intv[a,b]$ from another interval $\intv[e,f]$ with relation $\intv[e,f] \relto \intv[a,b]$ is permissible.
\end{itemize}

The following is an example of permissibility in the case of $\arr(\rep(A_3(bf)))$.
\begin{example}\label{example:CLbf}
  The following matrix is the block matrix form of $\arr(\rep(A_3(bf)))$, where we use the symbols $\Int{a,b}$ to denote the rows and columns corresponding to the direct summands $\intv[a,b]^{m_{a,b}}$.
  Each block $M\itoi{a,b,c,d} f\itoi{a,b,c,d}$ is abbreviated to $\*$ if $f\itoi{a,b,c,d} \neq 0$ and $\n$ otherwise.
  The prohibited additions for columns and rows, written as red arrows, correspond to the positions of the zero blocks $\n$.
  Since column addition from lower to upper blocks and row addition from left to right blocks are always prohibited, we write down only the prohibited column additions from upper to lower, and prohibited row addition from right to left.
  A notable fact is that either $f\itoi{a,b,1,3}$ or $f\itoi{1,3,a,b}$ is 0 for arbitrary $(a,b) \neq (1,3)$.
  In other words, $\intv[1,3] \reltoeq \intv[a,b] \reltoeq \intv[1,3]$ if and only if $(a,b) = (1,3)$.
  We will refer to this fact in the proof of \cref{lemma:funtoriality}.
  \begin{equation}
    \mattikz{
      \matheader{
        \* \& \n \& \n \& \n \& \n \& \n \\
        \n \& \* \& \n \& \n \& \n \& \n \\
        \* \& \* \& \* \& \n \& \n \& \n \\
        \* \& \n \& \* \& \* \& \n \& \n \\
        \n \& \* \& \* \& \n \& \* \& \n \\
        \n \& \n \& \* \& \* \& \* \& \* \\
      };
      \foreach[count = \i] \v in {\(\Int{3, 3}\), \(\Int{1, 1}\), \(\Int{1, 3}\), \(\Int{2, 3}\), \(\Int{1, 2}\), \(\Int{2, 2}\)}{
        \node[left=2.5ex] at (p-\i-1) {\scriptsize\v};
      }
  	\foreach[count = \i] \v in {\(\Int{3, 3}\), \(\Int{1, 1}\), \(\Int{1, 3}\), \(\Int{2, 3}\), \(\Int{1, 2}\), \(\Int{2, 2}\)}{
        \node[above=1.5ex] at (p-1-\i) {\scriptsize\v};
  	}
  	\node[right=6pt] (LR) at (p-1-6.east |- p-6-6.south east) {};
  	\rowarrow{(LR |- p-1-6.east)}{(LR |- p-2-6.east)};
  	\rowarrow{(LR |- p-1-6.east)}{(LR |- p-5-6.east)};
  	\rowarrow{(LR |- p-1-6.east)}{(LR |- p-6-6.east)};
  
  	\rowarrow{(LR |- p-2-6.east)}{(LR |- p-4-6.east)};
  	\rowarrow{(LR |- p-2-6.east)}{(LR |- p-6-6.east)};
  	
  	\rowarrow{(LR |- p-4-6.east)}{(LR |- p-5-6.east)};
  
  	\colarrow{(p-6-6.south |- LR)}{(p-6-1.south |- LR)};
  	\colarrow{(p-6-6.south |- LR)}{(p-6-2.south |- LR)};
  	
  	\colarrow{(p-6-5.south |- LR)}{(p-6-4.south |- LR)};
  	\colarrow{(p-6-5.south |- LR)}{(p-6-1.south |- LR)};
  	
  	\colarrow{(p-6-4.south |- LR)}{(p-6-2.south |- LR)};
  
  	\colarrow{(p-6-2.south |- LR)}{(p-6-1.south |- LR)};
    }
  \end{equation}
\end{example}

\section{The induced maps via quiver representations}\label{section:induced_maps}
In this section, we redefine the induced map of a correspondence by using quiver representations.
Let $H = H(-;K)$ be the homology functor with coefficient $K$.
As a representation of an $A_3$ type quiver, the diagram $HX \overset{p\s}{\gets} HF \overset{q\s}{\to} HY$ induced by a correspondence $F \subset X \times Y$ can be decomposed into a direct sum of interval representations:
\begin{equation}
	h \colon (HX \overset{p\s}{\gets} HF \overset{q\s}{\to} HY) \overset{\iso}{\to} \bigoplus_{1\leq b \leq d \leq 3} \intv[b,d]^{m_{b,d}}.
\end{equation}
This indecomposable decomposition can be written as the diagram
\begin{equation}
  \begin{tikzcd}
    & HX \dar[',"h_X"] \dar["\iso"] & \lar[',"p\s"] HF \rar["q\s"] \dar["\iso"] & HY \dar[',"h_Y"] \dar["\iso"] \\
    \bigoplus\limits_{1\leq b \leq d \leq 3} \intv[b,d]^{m_{b,d}} = & K^{\dim HX} & \lar K^{\dim HF} \rar["q^K"] & K^{\dim HY}.
  \end{tikzcd}
\end{equation}
The choice of bases gives us a relationship between bases of $HX$ and $HY$, which can be regarded as a map from $HX$ to $HY$.
For example, an interval representation $\intv[1,2]$ assigns an element of the standard basis of $K^{\dim HX}$ to 0 in $K^{\dim HY}$.
Therefore, non-trivial assignment happens only on the interval representations $\intv[1,3] = (K \overset{\id_K}{\gets} K \overset{\id_K}{\to} K)$.
By regarding the other interval representations as $0$ maps from $HX$ to $HY$, we can define a map $\iota_Y \circ \pi_X \colon K^{\dim HX} \to K^{\dim HY}$ factoring the interval $\intv[1,3]$
\begin{equation}
  \begin{tikzcd}
    \bigoplus\limits_{1\leq b \leq d \leq 3} \intv[b,d]^{m_{b,d}} \dar[twoheadrightarrow] = & K^{\dim HX} \dar[', twoheadrightarrow, "\pi_X"] & \lar K^{\dim HF} \rar["q^K"] \dar[', twoheadrightarrow, "\pi_F"] & K^{\dim HY} \dar[twoheadrightarrow, shift right=1ex] \\
    \intv[1,3]^{m_{1,3}} = & K^{m_{1,3}} & \lar[equal] K^{m_{1,3}} \rar[equal] & K^{m_{1,3}}, \uar[hookrightarrow, shift right=1ex, ', "\iota_Y"]
  \end{tikzcd}
\end{equation}
where the arrows $\twoheadrightarrow$ are the canonical projections of the vector spaces, and the morphism $\iota_Y$ is the canonical injection of the vector space.
Composing the path of morphisms, we can define the \emph{induced map of $F$ through $h$} as
\begin{equation}
  F\s := h_Y^{-1} \circ \iota_Y \circ \pi_X \circ h_X \colon HX \to HY.
\end{equation}

\begin{figure}[t]
		\centering
		\includegraphics[width=0.7\hsize]{./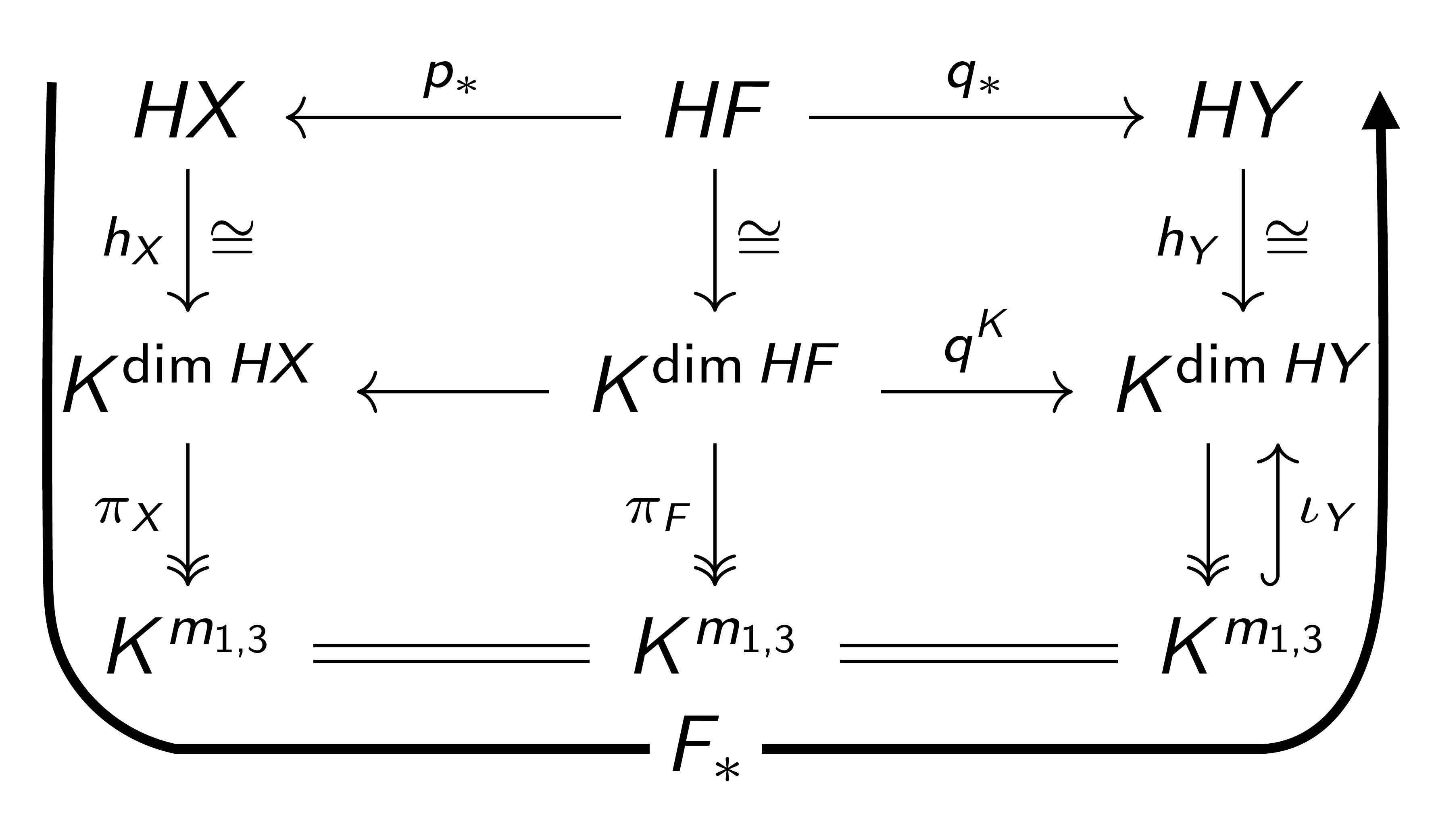}
		\caption{An overview of our definition of the induced map $F\s$ of a correspondence. The isomorphism from the first row to the second row is an indecomposable decomposition. The inclusion map on the right-hand side is the canonical injection of the vector space.} \label{figure:def}
\end{figure}

This definition does not need the two assumptions mentioned in \cref{definition:original_induced_map}.
Although our definition depends on the choice of isomorphism of indecomposable decomposition, when the two assumptions are satisfied, our definition coincides with the original definition $q\s \circ p\s^{-1}$ by the following theorem.

\begin{theorem}
  Define $F\s := h_Y^{-1} \circ \iota_Y \circ \pi_X \circ h_X$.
  If $F$ is homologically complete and homologically consistent, then $F\s = q\s \circ p\s^{-1}$.
\end{theorem}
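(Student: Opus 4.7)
My plan is to use the two hypotheses to restrict which interval summands can appear in the decomposition $h$, then compute both maps explicitly in the block basis that the decomposition provides.

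The first step is to show that homological completeness forces $m_{1,1}=0$ and homological consistency forces $m_{2,3}=0$. Among the six indecomposables of $A_3(bf)$, the interval $\intv[1,1]=(K\gets 0\to 0)$ is the only one contributing to the $HX$-coordinate while receiving nothing from $HF$, so a nonzero $m_{1,1}$ would produce a direct summand of $HX$ outside $\Im p\s$, contradicting completeness. The interval $\intv[2,3]=(0\gets K\to K)$ has $p\s$-component zero and $q\s$-component the identity, so a nonzero $m_{2,3}$ would produce an element of $\Ker p\s$ on which $q\s$ is nonzero, contradicting consistency.

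Once $m_{1,1}=m_{2,3}=0$, the isomorphisms $h_X$, $h_F$, $h_Y$ identify
\begin{align}
  HX &\iso K^{m_{1,2}}\oplus K^{m_{1,3}},\\
  HF &\iso K^{m_{1,2}}\oplus K^{m_{1,3}}\oplus K^{m_{2,2}},\\
  HY &\iso K^{m_{1,3}}\oplus K^{m_{3,3}},
\end{align}
and the linearized maps become block projections $p^K(a,b,c)=(a,b)$ and $q^K(a,b,c)=(b,0)$, read off directly from the structure of each interval representation. For $x\in HX$ with $h_X(x)=(u,v)$, the definition gives $F\s(x)=h_Y^{-1}(\iota_Y(\pi_X(u,v)))=h_Y^{-1}(v,0)$. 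By completeness I choose any $x'\in HF$ with $p\s(x')=x$; commutativity of the decomposition forces $h_F(x')=(u,v,c)$ for some $c\in K^{m_{2,2}}$, and then $h_Y(q\s(x'))=q^K(u,v,c)=(v,0)$. By consistency $q\s\circ p\s^{-1}$ is well defined, and $(q\s\circ p\s^{-1})(x)=q\s(x')=h_Y^{-1}(v,0)=F\s(x)$.

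The main obstacle is the first step: one has to argue at the level of the decomposed diagram, not on $HX$, $HF$, $HY$ as abstract spaces, so that the appearance of a forbidden summand translates directly into a failure of completeness or consistency. Once those two multiplicity restrictions are in place, the rest is a short diagrammatic computation in the interval basis and needs no further input.
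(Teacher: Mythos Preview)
Your proof is correct and follows essentially the same strategy as the paper: pass to the interval decomposition and verify the equality there. The one tactical difference is that you use homological completeness to force $m_{1,1}=0$ and then compute both maps elementwise on $HX$, whereas the paper uses completeness only as surjectivity of $p\s$, precomposes both sides with $p\s$, and reduces the claim to the single identity $q^K=\iota_Y\circ\pi_F$ on $K^{\dim HF}$ (checking it on the basis coming from $\intv[1,2]$, $\intv[2,2]$, $\intv[1,3]$, using $m_{2,3}=0$). The paper's reduction is slightly slicker in that it never needs $m_{1,1}=0$, but your version is equally valid and arguably more transparent about what each hypothesis buys.
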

\begin{proof}
  The claim to prove is
  \begin{equation}
    q\s \circ p\s^{-1} = h_Y^{-1} \circ \iota_Y \circ \pi_X \circ h_X.
  \end{equation}
  Since the morphism $p\s$ is surjective, this is equivalent to
  \begin{equation}
    q\s \circ p\s^{-1} \circ p\s = h_Y^{-1} \circ \iota_Y \circ \pi_X \circ h_X \circ p\s.
  \end{equation}
  In addition, $q\s = q\s \circ p\s^{-1} \circ p\s$ because of the homological consistency $q\s(\Ker(p\s)) = 0$, hence what we should prove is
  \begin{equation}
    q\s = h_Y^{-1} \circ \iota_Y \circ \pi_X \circ h_X \circ p\s.
  \end{equation}
  By chasing the diagram of \cref{figure:def}, this equation results in
  \begin{equation}
    q^K = \iota_Y \circ \pi_F.
  \end{equation}
  The standard basis of $K^{\dim HF}$ corresponds to the standard bases of the four intervals $\intv[2,3]$, $\intv[2,2]$, $\intv[1,2]$, and $\intv[1,3]$.
  Here we remark that the homological consistency $q\s(\Ker(p\s)) = 0$ is equivalent to $m_{2,3} = 0$, namely $\intv[2,3]$ does not exist as a direct summand.
  Moreover, the basis corresponding to $\intv[2,2]$ and $\intv[1,2]$ is mapped to $0$ by both $q^K$ and $\iota_Y \circ \pi_F$.
  By definition it is clear that $q^K(a) = \iota_Y \circ \pi_F(a)$ holds for each element $a$ of the standard basis of $K^{\dim HF}$ corresponding to the standard bases of $\intv[1,3]$.
\end{proof}

\section{Persistence analysis for sampled maps}\label{section:persistence_sampled_maps}
The ability to decompose and focus exclusively on the interval representation $\intv[1,3]$ provides persistence analysis for sampled maps.
Let us consider the following problem, which is similar to \cref{problem:correspondence} but requires additional assumptions of embeddings.
\begin{problem}\label{problem:dynamical}
Let $f\colon X \to Y$ be a continuous map for $X, Y \subset \mathbb{R}^n$.
If $X$, $Y$, and $f$ are unknown, and we know only a sampled map $f\restriction_S \colon S \to f(S)$ which is a restriction of $f$ on a finite subset $S\subset X$, then can we retrieve any information about the homology induced map $f_*\colon HX \to HY$?
\end{problem}
Note that the sampling $S$ is a point cloud capturing topological features of $X$ when $S$ is dense enough.
Originally the paper~\cite{dynamical_system} sets this problem with the more additional assumption that $X = Y$ and constructs a persistent homology of eigenspaces of the sampled map, in order to analyze the eigenspaces of the discrete dynamical system $f$.

In this section, we explain how to construct another persistent homology of a sampled map, which captures the generator of $HX$ and $Hf(X)$ connected by $f$.
We can utilize two types of filtrations in \cref{subsection:construction_using_simplicial_complexes,subsection:grid_filtration}.
The former filtration is generated using simplicial complexes, and we will prove a stability theorem (\cref{theorem:stability_cech}) of this construction in \cref{section:stability_analysis}.
The latter filtration is generated using grids as in \cref{section:induced_maps}, and also derives stability (\cref{theorem:stability_grid}).
The stability theorem for the latter, however, requires more assumptions than for the former.
Hence we explain in this order.

\subsection{Construction using simplicial complexes}\label{subsection:construction_using_simplicial_complexes}
First, we generate a filtration of abstract simplicial complexes of $S$,
\begin{equation}
  C_1 \subset C_2 \subset \cdots \subset C_\ell,
\end{equation}
each simplex of which has elements of $S$ as its vertices, so that the filtration can capture the topology of the underlying space $X$.
For example, \v{C}ech complexes or Vietoris--Rips complexes~\cite{computational_topology_frog_book} are available.
\begin{definition}\label{definition:cech_vietorisrips}
  Let $P \subset \mathbb{R}^n$ be a finite set.
  The \emph{\v{C}ech complex $\Gamma_r$} for $P$ with a radius $r$ is an abstract simplicial complex defined as
  \begin{equation}
    \Gamma_r = \set{\sigma \subset P | \bigcap_{p \in \sigma} B(p;r) \neq \emptyset},
  \end{equation}
  where $B(p;r)$ is the closed ball of center $p$ and radius $r$.
  Let $d_{\mathbb{R}^n}$ be the Euclidean metric on $\mathbb{R}^n$.
  The \emph{Vietoris--Rips complex $V_r$} for $P$ with a radius $r$ is an abstract simplicial complex defined as
  \begin{equation}
    V_r = \set{\sigma \subset P | \forall p_1, p_2 \in \sigma, d_{\mathbb{R}^n}(p_1, p_2) \leq 2r}.
  \end{equation}
\end{definition}

Similarly, we also generate a filtration of abstract simplicial complexes
\begin{equation}
  D_1 \subset D_2 \subset \cdots \subset D_\ell
\end{equation}
for $f(S)$.

Using these filtrations, we attempt to build a filtration of maps from $f\restriction_S$ to analyze the persistence of the original map $f$, in analogy with the classical technique of persistent homology.
Although we expect the sampled map to derive a simplicial map $C_i \to D_i$ on each $i$-th filter, in general, they can derive only a simplicial partial map\footnote{A correspondence $F$ from $X$ to $Y$ is \emph{partial map} if $F(x)$ is a singleton or empty set for all $x \in X$, where $F(x):=\set{y \in Y | (x,y) \in F}$.}
$f_i \colon C_i \nrightarrow D_i$.
Namely, for a simplex $\set{s_1, \ldots, s_d} \in C_i$, it is not assumed that $\set{f(s_1), \ldots, f(s_d)} \in D_i$.
Hence, the conventional technique computing topological persistence for simplicial maps~\cite{persisitence_for_simplicial_maps} is not available in this setup.

We remark that the graph $\Gr(f\restriction_S) = \set{(s,f(s)) \in \mathbb{R}^n \times \mathbb{R}^n | s \in S}$ of the sampled map is a point cloud in $\mathbb{R}^n \times \mathbb{R}^n$.
Let us define the $i$-th abstract simplicial complex $G_i$ of $\Gr(f\restriction_S)$ as
\begin{align}
  G_i := \bigl\{ \set{(s_1,f(s_1)), \ldots, (s_d,f(s_d))} \subset \Gr(f\restriction_S) \mid & \set{s_1, \ldots, s_d} \in C_i, \notag \\
  & \set{f(s_1), \ldots, f(s_d)} \in D_i \bigr\}.\label{eq:def_of_G_i}
\end{align}
One can show that $\set{G_i}$ forms a filtration.
The sequence of the partial maps can be regarded as a sequence $\set{C_i \overset{p_i}{\leftarrow} G_i \overset{q_i}{\to} D_i}$ of pairs of canonical projections $(p_i, q_i)$.
This sequence of pairs constitutes a filtration induced by the inclusion maps of the filtration
\begin{equation}
  \begin{tikzcd}
    & \vdots & \vdots & \vdots \\
    f_{i+1} \colon & C_{i+1} \uar[hook'] & G_{i+1} \lar["p_{i+1}"'] \rar["q_{i+1}"] \uar[hook'] & D_{i+1} \uar[hook'] \\
    f_i \colon & C_i \uar[hook'] & G_i \lar["p_i"'] \rar["q_i"] \uar[hook'] & D_i \uar[hook'] \\
    & \vdots \uar[hook'] & \vdots \uar[hook'] & \vdots \uar[hook']
  \end{tikzcd}.
\end{equation}
Applying the homology functor to the sequence, we obtain a sequence of representations of the $A_3(bf)$ type quiver:
\begin{equation}\label{eq:triple_homology}
  \begin{tikzcd}
    & \vdots & \vdots & \vdots \\
    {f_{i+1}}\s \colon & HC_{i+1} \uar & HG_{i+1} \lar["{p_{i+1}}\s"'] \rar["{q_{i+1}}\s"] \uar & HD_{i+1} \uar \\
    {f_i}\s \colon & HC_i \uar & HG_i \lar["{p_i}\s"'] \rar["{q_i}\s"] \uar & HD_i \uar \\
    & \vdots \uar & \vdots \uar & \vdots \uar
  \end{tikzcd}.
\end{equation}

\begin{remark}\label{remark:comparison_dom_and_E}
  The earlier research~\cite{dynamical_system} used domains of partial maps to construct a similar filtration.
  For a partial map $f_i \colon C_i \nrightarrow D_i$\footnote{To be accurate, the earlier research considers only the case that $C_i = D_i$ to analyze self-maps.}, the domain $\dom f_i$ is defined as
  \begin{equation}
    \dom f_i := \set{\set{s_1, \ldots, s_d} \in C_i | \set{f(s_1), \ldots, f(s_d)} \in D_i}.
  \end{equation}
  Using the inclusion $\iota_i \colon \dom f_i \hookrightarrow C_i$ and the simplicial map $f'_i \colon \dom f_i \to D_i$ induced by the map $f$ on $\dom f_i$, in a similar way the domains induce the representation
  \begin{equation}\label{eq:dom_representation}
    \begin{tikzcd}
      & \vdots & \vdots & \vdots \\
      {f_{i+1}}\s \colon & HC_{i+1} \uar & H\dom f_{i+1} \lar["{\iota_{i+1}}\s"'] \rar["{f'_{i+1}}\s"] \uar & HD_{i+1} \uar \\
      {f_i}\s \colon & HC_i \uar & H\dom f_i \lar["{\iota_i}\s"'] \rar["{f'_i}\s"] \uar & HD_i \uar \\
      & \vdots \uar & \vdots \uar & \vdots \uar
    \end{tikzcd}.
  \end{equation}
  By the definitions of $G_i$ and $\dom f_i$, it is straightforward to see that the induced representation~\eqref{eq:dom_representation} is isomorphic to our representation~\eqref{eq:triple_homology}.
  For the sake of consistency from the viewpoint of graphs and correspondences, we adopt the simplicial complexes $\set{G_i}$.
\end{remark}

Here, decomposing each filter $HC_i \gets HG_i \to HD_i$ as a representation to the intervals $\bigoplus_{1\leq b \leq d \leq 3} \intv[b,d]^{m_{b,d}^i}$, the representation~\eqref{eq:triple_homology} is isomorphic to
\begin{equation}
  \Lambda : \bigoplus_{1\leq b \leq d \leq 3} \intv[b,d]^{m_{b,d}^1} \rightarrow \bigoplus_{1\leq b \leq d \leq 3} \intv[b,d]^{m_{b,d}^2} \rightarrow \cdots \rightarrow \bigoplus_{1\leq b \leq d \leq 3} \intv[b,d]^{m_{b,d}^\ell}.
\end{equation}
Projecting to $\intv[1,3]$ again, we obtain a sequence of subrepresentations
\begin{equation}
 \Lambda [1,3] : \intv[1,3]^{m_{1,3}^1} \rightarrow \intv[1,3]^{m_{1,3}^2} \rightarrow \cdots \rightarrow \intv[1,3]^{m_{1,3}^\ell},
\end{equation}
which is three copies of an $A_\ell$ type representation as we will see later.

We should be careful in the construction of $\Lambda[1,3]$.
We write canonical projections and injections defined by direct sum as
\begin{align}
  \pi_i \colon \bigoplus_{1\leq b \leq d \leq 3} \intv[b,d]^{m_{b,d}^i} \to \intv[1,3]^{m_{1,3}^i} \\
  \iota_i \colon \intv[1,3]^{m_{1,3}^i} \to \bigoplus_{1\leq b \leq d \leq 3} \intv[b,d]^{m_{b,d}^i}
\end{align}
respectively, and the morphisms in $\Lambda$ as
\begin{equation}
  \Phi_i \colon \bigoplus_{1\leq b \leq d \leq 3} \intv[b,d]^{m_{b,d}^i} \to \bigoplus_{1\leq b \leq d \leq 3} \intv[b,d]^{m_{b,d}^{i+1}}.
\end{equation}
Then, the morphism in $\Lambda[1,3]$
\begin{equation}
  {\Phi_i}\itoi{1,3,1,3} \colon \intv[1,3]^{m_{1,3}^i} \to \intv[1,3]^{m_{1,3}^{i+1}}
\end{equation}
is defined by
\begin{equation}
  {\Phi_i}\itoi{1,3,1,3} := \pi_{i+1} \circ \Phi_i \circ \iota_i,
\end{equation}
which is the submatrix at $(\Int{1,3},\Int{1,3})$ in block matrix form of $\Phi_i$.

At a glance, this construction seems natural, but ``$\pi \colon \Lambda \to \Lambda[1,3]$'' is not a morphism in the representation category.
Namely,
\begin{equation}
    \begin{tikzcd}
    \bigoplus_{1\leq b \leq d \leq 3} \intv[b,d]^{m_{b,d}^i} \rar["\Phi_i"]  \dar["\pi_i"] & \bigoplus_{1\leq b \leq d \leq 3} \intv[b,d]^{m_{b,d}^{i+1}} \dar["\pi_{i+1}"] \\
    \intv[1,3]^{m_{1,3}^i} \rar["{\Phi_i}\itoi{1,3,1,3}"] & \intv[1,3]^{m_{1,3}^{i+1}}
  \end{tikzcd}
\end{equation}
does not always commute.
Consequently, the choice of isomorphism of indecomposable decomposition on each filter may make a difference in the output persistence diagram.
In order to make sense of this analysis, the output persistence diagram should be uniquely determined and independent of the choice of isomorphism.
The following theorem guarantees uniqueness and independence.

The restriction to the block $(\Int{1,3},\Int{1,3})$ has the following functoriality.

\begin{lemma}\label{lemma:funtoriality}
  Let
  \begin{equation}
    \Theta \colon \bigoplus_{1\leq b \leq d \leq 3} \intv[b,d]^{m_{b,d}^1} \to \bigoplus_{1\leq b \leq d \leq 3} \intv[b,d]^{m_{b,d}^2}
  \end{equation}
  and
  \begin{equation}
    \Psi \colon \bigoplus_{1\leq b \leq d \leq 3} \intv[b,d]^{m_{b,d}^2} \to \bigoplus_{1\leq b \leq d \leq 3} \intv[b,d]^{m_{b,d}^3} 
  \end{equation}
  be block matrix forms, then
  \begin{equation}
    \left[ \Psi \Theta \right] \itoi{1,3,1,3} = \Psi\itoi{1,3,1,3} \Theta\itoi{1,3,1,3}.
  \end{equation}
\end{lemma}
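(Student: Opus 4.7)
The plan is to apply the block-matrix composition formula that was derived in the preceding discussion and then exploit the ``notable fact'' flagged in \cref{example:CLbf}: in $\arr(\rep(A_3(bf)))$, the relation $\intv[1,3] \reltoeq \intv[a,b] \reltoeq \intv[1,3]$ forces $(a,b) = (1,3)$. This single combinatorial observation collapses the composition sum to one term and yields the functoriality instantaneously.

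Concretely, I would start by writing $\Theta = [C\itoi{a,b,c,d} f\itoi{a,b,c,d}]$ and $\Psi = [M\itoi{a,b,c,d} f\itoi{a,b,c,d}]$ in block matrix form, and then invoke the composition formula
\begin{equation}
  [\Psi \Theta]\itoi{a,b,c,d} = \Bigl(\sum_{\intv[a,b] \reltoeq \intv[e,f] \reltoeq \intv[c,d]} M\itoi{e,f,c,d} C\itoi{a,b,e,f}\Bigr) f\itoi{a,b,c,d}
\end{equation}
already established in the discussion of column operations. Specialising to $(a,b) = (c,d) = (1,3)$, the indexing set of the sum becomes $\{(e,f) : \intv[1,3] \reltoeq \intv[e,f] \reltoeq \intv[1,3]\}$, which by the noted fact contains only $(1,3)$. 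Hence
\begin{equation}
  [\Psi \Theta]\itoi{1,3,1,3} = M\itoi{1,3,1,3} C\itoi{1,3,1,3}\, f\itoi{1,3,1,3}.
\end{equation}

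On the other hand, $\Psi\itoi{1,3,1,3} \Theta\itoi{1,3,1,3} = (M\itoi{1,3,1,3} f\itoi{1,3,1,3})(C\itoi{1,3,1,3} f\itoi{1,3,1,3})$, and using the associativity relation $f\itoi{1,3,1,3} f\itoi{1,3,1,3} = f\itoi{1,3,1,3}$ from \cref{lemma:dimhom}(2) (applied with $(a,b) = (c,d) = (e,f) = (1,3)$), this rearranges to $M\itoi{1,3,1,3} C\itoi{1,3,1,3} f\itoi{1,3,1,3}$. The two expressions coincide, which is the claim.

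There is no real obstacle here; the content of the proof is entirely packaged in the remark of \cref{example:CLbf}. The only thing to double-check is that the composition formula I am quoting, which was stated in the text for an isomorphism $\Theta$ used to perform column operations, is in fact valid for an arbitrary composable pair of block matrix forms (it is, since the derivation only uses the distributivity of composition over direct sums and the relation $f\itoi{e,f,c,d} f\itoi{a,b,e,f} = f\itoi{a,b,c,d}$ from \cref{lemma:dimhom}). If a referee would like more than a one-line appeal, I would include the elementary verification that $f\itoi{1,3,1,3}$ is idempotent --- which is immediate from the candidate basis $(f\itoi{1,3,1,3})_i = \id_K$ for $i \in [1,3]$ --- and then the result reads off.
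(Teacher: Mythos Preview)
Your proposal is correct and essentially identical to the paper's own proof: both invoke the block-matrix composition formula, specialise to the $(\Int{1,3},\Int{1,3})$ block, collapse the sum to a single term via the observation from \cref{example:CLbf} that $\intv[1,3] \reltoeq \intv[a,b] \reltoeq \intv[1,3]$ forces $(a,b)=(1,3)$, and then factor back using $f\itoi{1,3,1,3}f\itoi{1,3,1,3}=f\itoi{1,3,1,3}$. The only cosmetic difference is notation (the paper names the scalar matrices of $\Theta$ and $\Psi$ as $M$ and $N$ rather than $C$ and $M$), and your side remark that the composition formula holds for arbitrary morphisms, not just isomorphisms, is a welcome clarification.
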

\begin{proof}
  Let the corresponding scalar matrix symbols of $\Theta$ and $\Psi$ be $M$ and $N$, respectively.
  The block matrix of $\Psi \Theta$ at $(\Int{1,3},\Int{1,3})$ is
  \begin{align}
    \left[ \Psi \Theta \right] \itoi{1,3,1,3} &= \sum_{\intv[1,3] \reltoeq \intv[a,b] \reltoeq \intv[1,3]} (N\itoi{a,b,1,3}f\itoi{a,b,1,3}) (M\itoi{1,3,a,b}f\itoi{1,3,a,b}) \\
    &= (\sum_{\intv[1,3] \reltoeq \intv[a,b] \reltoeq \intv[1,3]} N\itoi{a,b,1,3} M\itoi{1,3,a,b})f\itoi{1,3,1,3}
  \end{align}
  but only $\intv[1,3]$ can be the candidate for the interval $\intv[a,b]$ as we saw in \cref{example:CLbf}. 
  Therefore
  \begin{align}
    \left[ \Psi \Theta \right] \itoi{1,3,1,3} &= N\itoi{1,3,1,3} M\itoi{1,3,1,3} f\itoi{1,3,1,3}\\
    &= (N\itoi{1,3,1,3} f\itoi{1,3,1,3}) (M\itoi{1,3,1,3} f\itoi{1,3,1,3})\\
    &= \Psi\itoi{1,3,1,3} \Theta\itoi{1,3,1,3}.
  \end{align}
\end{proof}

\begin{lemma}\label{lemma:isomorphic_restriction}
  Let $\Theta \colon \bigoplus_{1\leq b \leq d \leq 3} \intv[b,d]^{m_{b,d}^1} \to \bigoplus_{1\leq b \leq d \leq 3} \intv[b,d]^{m_{b,d}^2}$ be the block matrix form of an isomorphism, then $\Theta \itoi{1,3,1,3}$ is an isomorphism.
\end{lemma}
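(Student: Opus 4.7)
The plan is to deduce the result directly from the functoriality of the restriction operation (\cref{lemma:funtoriality}), by exhibiting an explicit inverse of $\Theta\itoi{1,3,1,3}$.

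First, since $\Theta$ is an isomorphism in $\rep(A_3(bf))$ between the two direct sums of intervals, there exists a morphism $\Theta^{-1}$, which I will also write in block matrix form, satisfying $\Theta^{-1}\Theta = \id$ and $\Theta \Theta^{-1} = \id$ where $\id$ denotes the identity endomorphism on the respective direct sums $\bigoplus_{1\leq b \leq d \leq 3} \intv[b,d]^{m_{b,d}^k}$ ($k = 1, 2$). By \cref{lemma:funtoriality}, taking the restriction to the block $(\Int{1,3},\Int{1,3})$ commutes with composition, so
\begin{equation}
[\Theta^{-1}\Theta]\itoi{1,3,1,3} = (\Theta^{-1})\itoi{1,3,1,3}\,\Theta\itoi{1,3,1,3}, \qquad [\Theta \Theta^{-1}]\itoi{1,3,1,3} = \Theta\itoi{1,3,1,3}\,(\Theta^{-1})\itoi{1,3,1,3}.
\end{equation}

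The remaining task is to identify the left-hand sides, i.e.\ to check that $[\id]\itoi{1,3,1,3}$ is the identity on $\intv[1,3]^{m_{1,3}}$. The block matrix form of the identity endomorphism of $\bigoplus \intv[b,d]^{m_{b,d}}$ has diagonal blocks $I_{m_{a,b}} f\itoi{a,b,a,b}$ and zero blocks off the diagonal. By our fixed choice of bases from \cref{lemma:dimhom}, $f\itoi{1,3,1,3}$ is the identity endomorphism of $\intv[1,3]$, so $[\id]\itoi{1,3,1,3} = I_{m_{1,3}}\,f\itoi{1,3,1,3}$ is indeed the identity on $\intv[1,3]^{m_{1,3}}$.

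Combining the two displays yields $(\Theta^{-1})\itoi{1,3,1,3}\,\Theta\itoi{1,3,1,3} = \id$ and $\Theta\itoi{1,3,1,3}\,(\Theta^{-1})\itoi{1,3,1,3} = \id$, whence $\Theta\itoi{1,3,1,3}$ is an isomorphism with inverse $(\Theta^{-1})\itoi{1,3,1,3}$. The only nontrivial point is the verification that the diagonal $(\Int{1,3},\Int{1,3})$ block of the identity is the identity morphism, which relies critically on the canonical choice of basis morphisms $f\itoi{a,b,c,d}$ fixed earlier; no separate argument about the prohibited additions (or the observation from \cref{example:CLbf} that $\intv[1,3]$ is the unique $\reltoeq$-sandwich of itself) is needed beyond what \cref{lemma:funtoriality} already invokes. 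I expect no substantive obstacle.
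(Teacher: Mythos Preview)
Your proof is correct and follows essentially the same approach as the paper: take the inverse $\Psi = \Theta^{-1}$, apply \cref{lemma:funtoriality} to $\Psi\Theta$ and $\Theta\Psi$, and observe that the $(\Int{1,3},\Int{1,3})$ block of the identity is the identity on $\intv[1,3]^{m_{1,3}}$. Your additional explicit verification that $[\id]\itoi{1,3,1,3} = I_{m_{1,3}}\,f\itoi{1,3,1,3}$ is the identity is a detail the paper simply asserts, but otherwise the arguments are identical.
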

\begin{proof}
  Let $\Psi$ be the inverse of $\Theta$.
  By \cref{lemma:funtoriality},
  \begin{equation}
    \left[ \Psi \Theta \right]\itoi{1,3,1,3} = \Psi\itoi{1,3,1,3} \Theta\itoi{1,3,1,3}
  \end{equation}
  and the left-hand side is the block $(\Int{1,3},\Int{1,3})$ of the identity map, which is the identity map on $\intv[1,3]^{m_{1,3}^1}$.
  Similarly, $\Theta\itoi{1,3,1,3} \Psi\itoi{1,3,1,3}$ is identity map on $\intv[1,3]^{m_{1,3}^2}$, hence $\Theta\itoi{1,3,1,3}$ is isomorphic.
\end{proof}

\begin{theorem}\label{theorem:uniqueness_restriction}
  The isomorphism class of
  \begin{equation}
    {\Phi_i}\itoi{1,3,1,3} \colon \intv[1,3]^{m_{1,3}^i} \to \intv[1,3]^{m_{1,3}^{i+1}}
  \end{equation}
  is uniquely determined and independent of the choice of the bases of
  \begin{equation}
    \Phi_i \colon \bigoplus_{1\leq b \leq d \leq 3} \intv[b,d]^{m_{b,d}^i} \to \bigoplus_{1\leq b \leq d \leq 3} \intv[b,d]^{m_{b,d}^{i+1}}.
  \end{equation}
\end{theorem}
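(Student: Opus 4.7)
The plan is to reduce the statement to a direct conjugation argument built on the two preceding lemmas. Fix the underlying morphism $\varphi_i \colon V_i \to V_{i+1}$ at stage $i$, where $V_i = (HC_i \gets HG_i \to HD_i)$, and let $h_i, h'_i$ and $h_{i+1}, h'_{i+1}$ be two choices of indecomposable-decomposition isomorphism at the two endpoints. The two block matrix forms of $\varphi_i$ are then $\Phi_i = h_{i+1}\varphi_i h_i^{-1}$ and $\Phi'_i = h'_{i+1}\varphi_i (h'_i)^{-1}$. Setting $\Theta_j := h'_j h_j^{-1}$ for $j = i, i+1$, each $\Theta_j$ is an automorphism of $\bigoplus_{1\leq b\leq d\leq 3}\intv[b,d]^{m_{b,d}^j}$ in $\rep(A_3(bf))$, and by construction
\begin{equation}
\Phi'_i \;=\; \Theta_{i+1}\,\Phi_i\,\Theta_i^{-1}.
\end{equation}

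Next I would apply \cref{lemma:funtoriality} to this triple product (twice, once to each associated binary subproduct) to obtain
\begin{equation}
(\Phi'_i)\itoi{1,3,1,3} \;=\; \Theta_{i+1}\itoi{1,3,1,3}\,(\Phi_i)\itoi{1,3,1,3}\,(\Theta_i^{-1})\itoi{1,3,1,3}.
\end{equation}
Applying the same functoriality to the trivial identity $\Theta_i \Theta_i^{-1} = \id$ gives $(\Theta_i^{-1})\itoi{1,3,1,3} = (\Theta_i\itoi{1,3,1,3})^{-1}$, and by \cref{lemma:isomorphic_restriction} each $\Theta_j\itoi{1,3,1,3}$ is already an isomorphism of $\intv[1,3]^{m_{1,3}^j}$. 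Hence $(\Phi_i)\itoi{1,3,1,3}$ and $(\Phi'_i)\itoi{1,3,1,3}$ are conjugate via genuine isomorphisms at both endpoints, which is exactly the statement that they are isomorphic in the arrow category $\arr(\rep(A_\ell))$ (one-arrow version). Uniqueness of the isomorphism class of ${\Phi_i}\itoi{1,3,1,3}$ then follows.

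The work is essentially a bookkeeping exercise built on the two preceding lemmas, so I do not anticipate a real obstacle. The only conceptual point worth highlighting is that the entire ambiguity in the choice of indecomposable decomposition at each filter is captured by the automorphisms $\Theta_j$ of the direct sum, and that the $(\Int{1,3},\Int{1,3})$-restriction is functorial enough to push this ambient conjugation down to the $\intv[1,3]$-summand alone. The substantive ingredient that makes this go through — namely the fact noted in \cref{example:CLbf} that $\intv[1,3] \reltoeq \intv[a,b] \reltoeq \intv[1,3]$ forces $(a,b) = (1,3)$, which is precisely what cancels the cross-terms in the sum appearing in \cref{lemma:funtoriality} — has already been isolated upstream, so the present theorem is its clean corollary.
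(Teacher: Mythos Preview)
Your proposal is correct and follows essentially the same route as the paper: write the two block matrix forms as related by conjugation via isomorphisms (your $\Theta_{i+1},\Theta_i^{-1}$ play the role of the paper's $R,C$), apply \cref{lemma:funtoriality} to pass to the $(\Int{1,3},\Int{1,3})$ block, and invoke \cref{lemma:isomorphic_restriction} to conclude the conjugating maps restrict to isomorphisms. Your extra remark that $(\Theta_i^{-1})\itoi{1,3,1,3} = (\Theta_i\itoi{1,3,1,3})^{-1}$ is a harmless clarification already implicit in the proof of \cref{lemma:isomorphic_restriction}.
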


\begin{proof}
  Let $\Psi_i$ be a morphism isomorphic to $\Phi_i$, which is written as a commutative diagram
  \begin{equation}
    \begin{tikzcd}
    \bigoplus_{1 \leq b \leq d \leq n} \intv [b, d]^{m_{b, d}} \rar["\Phi_i"]  \dar[',"C"] \dar["\iso"] & \bigoplus_{1 \leq b \leq d \leq n} \intv [b, d]^{m'_{b, d}} \\
    \bigoplus_{1 \leq b \leq d \leq n} \intv [b, d]^{m_{b, d}} \rar["\Psi_i"] & \bigoplus_{1 \leq b \leq d \leq n} \intv [b, d]^{m'_{b, d}} \uar["R"] \uar[',"\iso"]
    \end{tikzcd}
  \end{equation}
  for some isomorphisms $C$ and $R$.
  Namely, $\Phi_i = R \Psi_i C$, and by \cref{lemma:funtoriality}, applying the restriction yields
  \begin{align}
    {\Phi_i}\itoi{1,3,1,3} &= \left[ R \Psi_i C \right] \itoi{1,3,1,3} \\
    &= R\itoi{1,3,1,3} {\Psi_i}\itoi{1,3,1,3} C\itoi{1,3,1,3}.
  \end{align}
  By \cref{lemma:isomorphic_restriction}, $R\itoi{1,3,1,3}$ and $C\itoi{1,3,1,3}$ are isomorphisms, hence ${\Phi_i}\itoi{1,3,1,3}$ is isomorphic to ${\Psi_i}\itoi{1,3,1,3}$.
\end{proof}

Since regarding $\intv[1,3] = (K \overset{\id_K}{\gets} K \overset{\id_K}{\to} K)$ as $K$ omits no information, the sequence $\Lambda[1,3]$ can be seen as a sequence of vector spaces
\begin{equation}
  K^{m_{1,3}^1} \rightarrow K^{m_{1,3}^2} \rightarrow \cdots \rightarrow K^{m_{1,3}^\ell},
\end{equation}
which is an $A_\ell$ type representation.
We call this representation \emph{persistent homology of the sampled map $f\restriction_S$}.
Decomposing into intervals, we can draw a persistence diagram, which shows us the robustness of the generators of homology in both filtrations, which are assigned by $f$.
Simultaneously we have constructed the filtration of complexes approximating the unknown spaces $X$ and $f(X)$.

\paragraph{In comparison with the earlier research}
This persistence diagram does not provide any information about eigenvectors, unlike~\cite{dynamical_system}; however, it can be widely applied.
First, since our method does not use the eigenspace functor, we need not require both sides' spaces to be the same.
Therefore, even in the case of sampled dynamical systems $X = Y$ like the previous research, we can weaken the assumption $f\restriction_S \colon S \to S$ to $f\restriction_S \colon S \to f(S)$ and take another filtration on $f(S)$.
(If $f(S)$ is not dense enough for sampling $X$, then we can take $S \cup f(S)$ instead.)
Moreover, since the previous method needs to set an eigenvalue before analysis, they have to predict some behavior of $f$ in advance, but our method does not need any prior information.
The numerical experiments in \cref{section:numerical_example} will emphasize this difference.

\subsection{Constriction using a grid}\label{subsection:grid_filtration}
Subsequently, we provide another construction of a filtration constructed by dividing the spaces.
Suppose the spaces $X$ and $Y$ are embedded into Euclidean space $\mathbb{R}^n$, and both $\mathbb{R}^n$ are divided by $n$-dimensional $\varepsilon$-cubes
\begin{equation}
  \set{[a_1\varepsilon, (a_1 + 1)\varepsilon] \times \dots \times [a_n\varepsilon, (a_n + 1)\varepsilon] | a_1, \dots, a_n \in \mathbb{Z}}.
\end{equation}
To distinguish the two divisions, we write this set as $\mathcal{X}_\varepsilon$ for the $X$ side Euclidean space and $\mathcal{Y}_\varepsilon$ for $Y$ side.
Let $f\restriction_S$ be a sampled map of a continuous map $f \colon X \to Y$, and $p$ and $q$ be the canonical projections of $\mathbb{R}^n \times \mathbb{R}^n$ to the $X$ side and $Y$ side Euclidean spaces, respectively.
First, we generate a correspondence
\begin{align}
  F^{f\restriction_S}_\varepsilon := \bigl\{ (x,y) \in \mathbb{R}^n \times \mathbb{R}^n | & x \in \exists X' \in \mathcal{X}_\varepsilon, \notag \\
  & y \in \exists Y' \in \mathcal{Y}_\varepsilon, (X' \times Y') \cap \Gr (f\restriction_S) \neq \emptyset \bigr\}
\end{align}
where $\Gr (f\restriction_S) := \set{(s, f(s)) | s \in S}$ (see \cref{figure:partition}).

In this setup, we use the $L^\infty$ metric $d_\infty((x_i),(y_i)) := \max_i(|x_i - y_i|)$ for both spaces $\mathbb{R}^n$ and $\mathbb{R}^n \times \mathbb{R}^n$.
To construct a filtration along with the grids, let us define the filtration of a correspondence
\begin{equation}
  F_{i\varepsilon} := (F^{f\restriction_S}_\varepsilon)_{i\varepsilon} = \set{r \in \mathbb{R}^n \times \mathbb{R}^n | d_\infty(r,F^{f\restriction_S}_\varepsilon) \leq i\varepsilon} \quad (i \in \mathbb{Z}_{\geq 1})
\end{equation}
and morphisms $p_{i\varepsilon} := p\restriction_{F_{i\varepsilon}}$ and $q_{i\varepsilon} := q\restriction_{F_{i\varepsilon}}$.
Here we restrict $i = 1,\ldots,\ell$ for sufficiently large $\ell$.
Then we have a similar diagram as before,
\begin{equation}
  \begin{tikzcd}
    \vdots & \vdots & \vdots \\
    p(F_{(i+1)\varepsilon}) \uar[hook] & \lar[',"p_{(i+1)\varepsilon}"] F_{(i+1)\varepsilon} \uar[hook] \rar["q_{(i+1)\varepsilon}"] & q(F_{(i+1)\varepsilon}) \uar[hook] \\
    p(F_{i\varepsilon}) \uar[hook] & \lar[',"p_{i\varepsilon}"] F_{i\varepsilon} \uar[hook] \rar["q_{i\varepsilon}"] & q(F_{i\varepsilon}) \uar[hook] \\
    \vdots \uar[hook] & \vdots \uar[hook] & \vdots \uar[hook] \\
  \end{tikzcd}
\end{equation}
allowing us to obtain the filtrations $\set{p(F_{i\varepsilon})}$ and $\set{q(F_{i\varepsilon})}$, capturing the persistent topological features of $X$ and $f(X)$.
Again, the homology functor derives the sequence of morphisms in $\rep(A_3(bf))$, therefore we can execute the same analysis as before, transforming it into block matrix form, restricting to the blocks $(\Int{1,3},\Int{1,3})$, identifying it with a representation of the $A_\ell$ type quiver, and producing a persistence diagram.

\section{Stability}\label{section:stability_analysis}
In order for a tool in topological data analysis to be considered practical, the output persistence diagrams should behave continuously toward input data.
Such a property is known as a stability theorem~\cite{stability_original,stability_proximity} and has been proved for persistence modules on $\mathbb{R}$.

Let $\mathsf{vect}$ be the category of finite dimensional vector spaces, $\mathbb{R}$ be the poset category of real numbers\footnote{For $x, y \in \mathbb{R}$, a morphism $x \to y$ uniquely exists if and only if $x \leq y$.}.
An object of the functor category $\mathsf{vect}^\mathbb{R}$ is also called a persistence module in some papers.
To distinguish it from our definition, we call this an \emph{$\mathbb{R}$-persistence module}.

Specifically, for an $\mathbb{R}$-persistence module $M$, we assign a vector space $M_t$ for $t \in \mathbb{R}$ and a linear map $\varphi_M(s,t) \colon M_s \to M_t$ for $s \leq t \in \mathbb{R}$, where
\begin{equation}
  \varphi_M(t,t) = \id_{M_t} \quad \text{and} \quad \varphi_M(s,t) \circ \varphi_M(r,s) = \varphi_M(r,t)
\end{equation}
for all $r \leq s \leq t \in \mathbb{R}$.
We call the linear maps $\varphi_M(s,t)$ \emph{transition maps}.
A morphism $f \colon M \to N$ of $\mathbb{R}$-persistence modules is a natural transformation, that is a collection of morphisms $\set{f_t \colon M_t \to N_t | t \in \mathbb{R}}$ commuting
\begin{equation}
  \begin{tikzcd}
    M_s \dar["f_s"] \rar["{\varphi_M(s,t)}"] & M_t \dar["f_t"] \\
    N_s \rar["{\varphi_N(s,t)}"] & N_t
  \end{tikzcd}
\end{equation}
for all $s \leq t \in \mathbb{R}$.

We remark that every persistence module can be similarly regarded as a functor from a finite poset category to $\mathsf{vect}$.

The fundamental objects of $\mathbb{R}$-persistence modules are \emph{interval modules $K_I$} for intervals $I \subset \mathbb{R}$, given by $(K_I)_t = K$ for $t \in I$ and $(K_I)_t = 0$ otherwise, and with the morphism corresponding to $s \leq t \in I$ is an identity map.
As is the case with persistent homology, every $\mathbb{R}$-persistence module can be decomposed into a direct sum of interval modules~\cite{interval_decomposition}.

We can define a distance between $\mathbb{R}$-persistence modules, called the interleaving distance.
\begin{definition}
  For $\delta \geq 0$, define the functor $(\cdot)(\delta) \colon \mathsf{vect}^\mathbb{R} \to \mathsf{vect}^\mathbb{R}$, called the \emph{shift functor}, as follows.
  For an \rn-persistence module $M$, $M(\delta)_t := M_{t+\delta}$ and $\varphi_{M(\delta)}(s,t) := \varphi_M(s+\delta, t+\delta)$.
  For a morphism $f$ in $\rpm$, $f(\delta) := f_{t+\delta}$.
\end{definition}

\begin{definition}
  For an \rn-persistence module $M$ and $\delta \geq 0$, the \emph{$\delta$-transition morphism} $\varphi_M(\delta) \colon M \to M(\delta)$ is defined as $\varphi_M(\delta)_t := \varphi_M(t, t+\delta)$.
\end{definition}

\begin{definition}
  \rn-persistence modules $M$ and $N$ are said to be \emph{$\delta$-interleaved} if there exist morphisms $f \colon M \to N(\delta)$ and $g \colon N \to M(\delta)$ such that
  \begin{equation}
    g(\delta) \circ f = \varphi_M(2\delta) \quad \text{and} \quad f(\delta) \circ g = \varphi_N(2\delta).
  \end{equation}
  The \emph{interleaving distance} $d_I \colon \rpm \times \rpm \to [0,\infty]$ is defined by
  \begin{equation}
    d_I(M,N) := \inf_\delta \set{\text{$M$ and $N$ are $\delta$-interleaved}}.
  \end{equation}
\end{definition}

An often used distance between persistence diagrams is the \emph{bottleneck distance}, which is defined by bijections between them.
It is well-known that the interleaving distance of $\mathbb{R}$-persistence modules is equal to the bottleneck distance of their persistence diagrams~\cite{isometry_theorem_original,isometry_theorem_induced_matching}.
Hence, by showing that a distance between input data is greater than the interleaving distance of their \rn-persistence module, we can prove the stability of the persistence diagrams toward input data.

In analogy with~\cite{dynamical_system}, stability theorems for some filtrations also hold on our analysis as follows.
The discrete setting discussed in \cref{section:persistence_sampled_maps} is enough for implementation, but we extend it to a continuous analysis to prove its stability.

Now we use the following filtrations for $S$ and $f(S)$.
Let $d_{\mathbb{R}^n \times \mathbb{R}^n}$ be a distance on $\mathbb{R}^n \times \mathbb{R}^n$ defined by
\begin{equation}
  d_{\mathbb{R}^n \times \mathbb{R}^n}((x_1, y_1), (x_2, y_2)) := \max\{d_{\mathbb{R}^n}(x_1, x_2), d_{\mathbb{R}^n}(y_1, y_2)\},
\end{equation}
where $d_{\mathbb{R}^n}$ is the Euclidean metric on $\mathbb{R}^n$.
For a subset $U$ of $\mathbb{R}^n$, we define a function $d_U \colon \mathbb{R}^n \to \mathbb{R}_{\geq 0}$ to be infimum distance to a point in $U$.
In the same way, we define the function $d_U \colon \mathbb{R}^n \times \mathbb{R}^n \to \mathbb{R}_{\geq 0}$ for a subset $U$ of $\mathbb{R}^n \times \mathbb{R}^n$.
We use the notation $U_r := d_U^{-1}[0,r]$ to denote the sublevel sets.

Let $\mathsf{Top^{(bf)}}$ be the functor category from the $A_3(bf)$ type quiver $(\cdot \gets \cdot \to \cdot )$ as a poset category to the category of topological spaces.
The sublevel sets $S_r$, $f(S)_r$, and $\Gr(f\restriction_S)_r$ constitute the filtration $\set{S_r \gets \Gr(f\restriction_S)_r \to f(S)_r}$ in $\mathsf{Top^{(bf)}}$ with morphisms induced by the inclusions such that the diagram
\begin{equation}\label{eq:inclusion_maps_topbf}
  \begin{tikzcd}
    S_r & \lar \Gr(f\restriction_S)_r \rar & f(S)_r\\
    S_s \uar[hook] & \lar \Gr(f\restriction_S)_s \uar[hook] \rar & f(S)_s \uar[hook]
  \end{tikzcd}
\end{equation}
commutes for every $s \leq r \in \mathbb{R}_{\geq 0}$.

In the same way as in the discrete analysis, applying the homology functor $H$ to the filtration produces $\set{HS_r \gets H\Gr(f\restriction_S)_r \to Hf(S)_r}$, which is a family of objects in the representation category $\rep(A_3(bf))$ with the induced morphisms from Diagram~(\ref{eq:inclusion_maps_topbf}).

\begin{remark}
  We have constructed the different representations from the previous representations using complexes in \cref{subsection:construction_using_simplicial_complexes}, but these are isomorphic if we adopt \v{C}ech complexes.
  It is known by the Nerve Lemma~\cite{nerve_lemma} that, if $U$ is a finite subset in a metric space, then the sublevel set $U_r$ is homotopy equivalent to the \v{C}ech complex of $U$ with radius $r$.
  Therefore, letting $C_r$, $G_r$, and $D_r$ be \v{C}ech complexes with radius $r$ of the finite subsets $S$, $\Gr(f\restriction_S)$, and $f(S)$, respectively, the induced family $\set{HC_r \gets HG_r \to HD_r}$ is isomorphic to the family $\set{HS_r \gets H\Gr(f\restriction_S)_r \to Hf(S)_r}$.
\end{remark}

Since decomposing every representation into intervals is isomorphic in the functor category $\rep(A_3(bf))^{\mathbb{R}}$, the family $\set{HS_r \gets H\Gr(f\restriction_S)_r \to Hf(S)_r}$ is isomorphic to $\set{\bigoplus_{1\leq b \leq d \leq 3} \intv[b,d]^{m_{b,d}^r}}$, and the induced morphisms can be written in block matrix form again.

By \cref{lemma:funtoriality,theorem:uniqueness_restriction}, the family $\set{\intv[1,3]^{m_{1,3}^r}}$ and the induced morphisms are uniquely determined up to isomorphism.
This gives us three copies of the \rn-persistence module $\set{K^{m_{1,3}^r} = K^{m_{1,3}^r} = K^{m_{1,3}^r}}$.
Thus, we obtain an \rn-persistence module $\set{K^{m_{1,3}^r}}$.
We denote this \rn-persistence module of the sampled map $f\restriction_S$ as $M^{f\restriction_S}$ and call it the \emph{\rn-persistence module of the sampled map}.

\begin{remark}
  The construction of an \rn-persistence module using graphs does not require the assumption that $S$ is a finite set.
  Therefore, if we assume that $\dim HX_r$, $\dim H\Gr(f)_r$, and $\dim Hf(X)_r$ are finite for an arbitrary $r$, then the same analysis can be executed on the filtration $\set{X_r \gets \Gr(f)_r \to f(X)_r}$, deriving an \rn-persistence module $M^f$ in the same way.
  We call this the \emph{\rn-persistence module of the map $f$}.
  The output persistence diagram portrays the robustness of the generators of the homology induced map $f\s$.
\end{remark}

After these setups, we can show the following stability theorem.
\begin{theorem}\label{theorem:stability_cech}
  Let $d_H$ be a Hausdorff distance induced by $d_{\mathbb{R}^n \times \mathbb{R}^n}$.
  For two sampled maps $h \colon S \to \mathbb{R}^n$ and $h' \colon S' \to \mathbb{R}^n$, let $M^h$, $M^{h'}$ be the \rn-persistence modules of the sampled maps.
  Then,
  \begin{equation}
    d_I(M^h, M^{h'}) \leq d_H(\Gr(h), \Gr(h')).
  \end{equation}
\end{theorem}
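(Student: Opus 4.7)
The plan is to set $\delta := d_H(\Gr(h), \Gr(h'))$ and exhibit a $\delta$-interleaving between $M^h$ and $M^{h'}$ by transporting an honest Hausdorff containment of graphs through every stage of the construction (sublevel sets, projections, homology, indecomposable decomposition, block-restriction to $(\Int{1,3},\Int{1,3})$).

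First I would verify the set-level inclusions. By definition of Hausdorff distance, $\Gr(h) \subset \Gr(h')_\delta$ and $\Gr(h') \subset \Gr(h)_\delta$, so $\Gr(h)_r \subset \Gr(h')_{r+\delta}$ and $\Gr(h')_r \subset \Gr(h)_{r+\delta}$ for every $r\geq 0$. Because $d_{\mathbb{R}^n\times\mathbb{R}^n}$ is the sup-metric of the two factor distances, the canonical projections $p,q$ are $1$-Lipschitz, which immediately yields the companion inclusions $S_r \subset S'_{r+\delta}$, $h(S)_r \subset h'(S')_{r+\delta}$ (and symmetrically), and these commute with $p$ and $q$. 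These inclusions are natural in $r$, so they define morphisms of $\mathbb{R}$-indexed diagrams
\begin{equation*}
\{S_r \gets \Gr(h)_r \to h(S)_r\} \longrightarrow \{S'_{r+\delta} \gets \Gr(h')_{r+\delta} \to h'(S')_{r+\delta}\}
\end{equation*}
in $\mathsf{Top^{(bf)}}$, together with the symmetric morphism going the other way.

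Next I would apply the homology functor $H$ to lift these to morphisms of $\mathbb{R}$-indexed diagrams in $\rep(A_3(bf))$. Choosing an indecomposable decomposition on each fibre and passing to block matrix form (\cref{definition:block_matrix_form}), one obtains two block-matrix morphisms; by \cref{theorem:uniqueness_restriction} the resulting $(\Int{1,3},\Int{1,3})$-block is well-defined up to isomorphism independently of the chosen decomposition. Call the resulting morphisms of $\mathbb{R}$-persistence modules $f\colon M^h \to M^{h'}(\delta)$ and $g\colon M^{h'} \to M^h(\delta)$.

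To finish, I would verify the interleaving identities $g(\delta)\circ f = \varphi_{M^h}(2\delta)$ and $f(\delta)\circ g = \varphi_{M^{h'}}(2\delta)$. Both sides of the first identity arise from the topological-level inclusion $\Gr(h)_r \hookrightarrow \Gr(h)_{r+2\delta}$ (one factoring through $\Gr(h')_{r+\delta}$, the other direct); these agree on the nose. Applying $H$, decomposing, and restricting to the $(\Int{1,3},\Int{1,3})$ block preserves this equality because restriction to that block commutes with composition by \cref{lemma:funtoriality}. The symmetric identity follows identically. This shows $M^h$ and $M^{h'}$ are $\delta$-interleaved and hence $d_I(M^h, M^{h'}) \leq \delta = d_H(\Gr(h), \Gr(h'))$.

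The main obstacle is the bookkeeping in the last two steps: indecomposable decompositions are only unique up to isomorphism, so one must justify that the block-restriction procedure both lands in a well-defined $\mathbb{R}$-persistence module and transports compositions of topological inclusions to equations of morphisms between these modules. This is exactly where \cref{lemma:funtoriality,theorem:uniqueness_restriction} are doing the heavy lifting; once their functorial content is unpacked, the interleaving identities reduce to the evident equality of nested inclusions at the topological level.
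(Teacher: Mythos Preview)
Your proposal is correct and follows essentially the same approach as the paper: establish the Hausdorff containments on graphs, push them to the projected sublevel sets (the paper phrases this as ``$d_H(\Gr(h),\Gr(h'))=\varepsilon$ implies $d_H(S,S')\leq\varepsilon$ and $d_H(h(S),h'(S'))\leq\varepsilon$'', which is your $1$-Lipschitz observation), assemble the resulting commutative squares in $\mathsf{Top^{(bf)}}$, and then pass through homology and the block restriction. The paper compresses your last two paragraphs into the single sentence ``By those functoriality, it is straightforward that these inclusions induce $\varepsilon$-interleaving morphisms,'' so your explicit invocation of \cref{lemma:funtoriality,theorem:uniqueness_restriction} to handle the decomposition bookkeeping is exactly the content hidden behind that phrase.
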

\begin{proof}
  Let $\varepsilon := d_H(\Gr(h), \Gr(h'))$, and $r$ be an arbitrary real number.
  
  By the definition of Hausdorff distance, $\Gr(h)_r \subset \Gr(h')_{r+\varepsilon}$ and $\Gr(h')_r \subset \Gr(h)_{r+\varepsilon}$.
  Moreover, $\varepsilon = d_H(\Gr(h), \Gr(h'))$ implies that $d_H(S, S') \leq \varepsilon$ and $d_H(h(S), h'(S')) \leq \varepsilon$, hence $S_r \subset S'_{r+\varepsilon}$, $S'_r \subset S_{r+\varepsilon}$, $h(S)_r \subset h'(S')_{r+\varepsilon}$, and $h'(S')_r \subset h(S)_{r+\varepsilon}$ as well.
  These inclusions induce the commutative diagrams:
  \begin{equation}
    \begin{tikzcd}
      S'_{r+\varepsilon} & \lar \Gr(h')_{r+\varepsilon} \rar & h'(S')_{r+\varepsilon}\\
      S_r \uar[hook] & \lar \Gr(h)_r \uar[hook] \rar & h(S)_r \uar[hook]
    \end{tikzcd}
  \end{equation}
  and
  \begin{equation}
    \begin{tikzcd}
      S_{r+\varepsilon} & \lar \Gr(h)_{r+\varepsilon} \rar & h(S)_{r+\varepsilon}\\
      S'_r \uar[hook] & \lar \Gr(h')_r \uar[hook] \rar & h'(S')_r \uar[hook]
    \end{tikzcd}.
  \end{equation}
  By those functoriality, it is straightforward that these inclusions induce $\varepsilon$-interleaving morphisms between $M^h$ and $M^{h'}$, and we have $d_I(M^h, M^{h'}) \leq \varepsilon$.
\end{proof}
Accordingly, the obtained persistence modules and persistence diagrams can only have as much noise as $S$ or its evaluation by $f$.

Since the proof does not use the assumption that $S$ is a finite set, a similar inequality holds for \rn-persistence modules of maps.
\begin{corollary}\label{corollary:stability_map}
  Let $U$ and $U'$ be subsets in $\mathbb{R}^n$.
  If \rn-persistence modules $M^h$ and $M^{h'}$ of maps $h \colon U \to \mathbb{R}^n$ and $h' \colon U' \to \mathbb{R}^n$ are defined, then
  \begin{equation}
    d_I(M^h, M^{h'}) \leq d_H(\Gr(h), \Gr(h')).
  \end{equation}
\end{corollary}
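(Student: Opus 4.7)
The plan is to observe that the proof of \cref{theorem:stability_cech} never actually invoked the finiteness of $S$: it only used that the graphs and the projected subsets $S$, $h(S)$ are subsets of Euclidean space whose sublevel sets are well-defined, together with the functoriality needed to build an interleaving. Under the hypothesis that $M^h$ and $M^{h'}$ are defined (so the relevant homology groups are finite dimensional), the entire argument transfers verbatim. I would therefore structure the proof as a direct adaptation rather than a separate construction.

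Concretely, I would set $\varepsilon := d_H(\Gr(h),\Gr(h'))$ and first verify that the Hausdorff inequality on graphs propagates to each of the three component subsets. Namely, since $d_{\mathbb{R}^n \times \mathbb{R}^n}((x_1,y_1),(x_2,y_2)) = \max\{d_{\mathbb{R}^n}(x_1,x_2), d_{\mathbb{R}^n}(y_1,y_2)\}$, the projections are $1$-Lipschitz, so
\begin{equation}
  d_H(U,U') \leq \varepsilon \quad \text{and} \quad d_H(h(U),h'(U')) \leq \varepsilon.
\end{equation}
This yields the sublevel-set inclusions $U_r \subset U'_{r+\varepsilon}$, $\Gr(h)_r \subset \Gr(h')_{r+\varepsilon}$, $h(U)_r \subset h'(U')_{r+\varepsilon}$, and symmetrically with $h,h'$ swapped.

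Next, these inclusions assemble into the same two commutative squares in $\mathsf{Top}^{(bf)}$ that appeared in the proof of \cref{theorem:stability_cech}, relating the $A_3(bf)$-shaped diagrams at levels $r$ and $r+\varepsilon$. Applying the homology functor and decomposing each filter into intervals, \cref{lemma:funtoriality,theorem:uniqueness_restriction} guarantee that restriction to the $\intv[1,3]$-block is functorial and isomorphism-invariant, so the induced morphisms descend to morphisms $M^h_r \to M^{h'}_{r+\varepsilon}$ and $M^{h'}_r \to M^h_{r+\varepsilon}$. The compositions equal the $2\varepsilon$-transition maps because they arise from the same chain of inclusions at the space level, and functoriality of $H$ followed by functoriality of the restriction preserves this identity. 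This furnishes an $\varepsilon$-interleaving between $M^h$ and $M^{h'}$, giving $d_I(M^h,M^{h'}) \leq \varepsilon$.

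The only genuinely new subtlety compared to the finite case is the implicit well-definedness: one must keep track that the assumption ``\rn-persistence modules $M^h$ and $M^{h'}$ are defined'' (i.e.\ $\dim HU_r$, $\dim H\Gr(h)_r$, $\dim Hh(U)_r$ finite for every $r$) is precisely what is needed for the block-matrix framework of \cref{section:preliminaries} and the restriction functoriality lemmas to apply at each level. Once this is noted, the argument is a routine translation. I do not anticipate a significant obstacle; if anything, the mildly delicate point is justifying the Lipschitz reduction from $d_H(\Gr(h),\Gr(h'))$ to the Hausdorff distances of the projections, but this is immediate from the max-form of $d_{\mathbb{R}^n \times \mathbb{R}^n}$.
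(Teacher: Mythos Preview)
Your proposal is correct and matches the paper's approach exactly: the paper simply remarks, immediately before the corollary, that ``the proof does not use the assumption that $S$ is a finite set,'' and states the corollary without further argument. Your write-up is a faithful (and more detailed) unpacking of precisely that observation.
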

By \cref{corollary:stability_map}, the error (bottleneck distance) between the persistence diagram of a sampled map $f\restriction_S$ and that of the original map $f$ is bounded by the error $d_H(\Gr(f), \Gr(f\restriction_S))$ of the sampled map.
If the sampled map is dense enough, we can infer the persistent generators of $f\s$ from the persistence module of the sampled map.

Finally, let us provide the stability of the persistence analysis using grids.
We regard the persistent homology constructed using a grid as an \rn-persistence module by the embedding induced by
\begin{equation}
  F_r :=
  \begin{cases}
    \emptyset & \text{($r < i\varepsilon$)} \\
    F_{i\varepsilon} & \text{($i\varepsilon \leq r < (i+1)\varepsilon$ for $i \in \mathbb{Z}_{\geq 1}$)}
  \end{cases}.
\end{equation}
\begin{theorem}\label{theorem:stability_grid}
  Let $d_H$ be a Hausdorff distance induced by $d_\infty$.
  For two sampled maps $h \colon S \to \mathbb{R}^n$ and $h' \colon S' \to \mathbb{R}^n$, we write the filtrations of correspondences as $\set{F_r}$ and $\set{F'_r}$, and let $M^h$ and $M^{h'}$ be their output \rn-persistence modules.
  If $d_H (\Gr(h),\Gr(h')) \leq \varepsilon$, then $d_I(M^h, M^{h'}) \leq \varepsilon$.
\end{theorem}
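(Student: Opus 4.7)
The plan is to mirror the proof of \cref{theorem:stability_cech}, adapting it to the grid-based filtration. Writing $G^h := F^{f\restriction_S}_\varepsilon$ for the initial grid correspondence associated with $h$, each filter $F^h_{i\varepsilon}$ is by construction the closed $d_\infty$-neighborhood of radius $i\varepsilon$ around $G^h$, and symmetrically for $h'$. The key first step is a geometric inclusion claim: for every $r \geq 0$, one has $F^h_r \subset F^{h'}_{r+\varepsilon}$ and $F^{h'}_r \subset F^h_{r+\varepsilon}$ in the sense imposed by the step-function embedding. Starting from a point $p \in F^h_{i\varepsilon}$, the argument chases $p$ via the triangle inequality: $p$ is within $d_\infty$-distance $i\varepsilon$ of some $q \in G^h$; $q$ lies in a grid cell containing a sample $(s, h(s)) \in \Gr(h)$; by the Hausdorff bound, there exists $(s', h'(s')) \in \Gr(h')$ with $d_\infty((s,h(s)),(s',h'(s'))) \leq \varepsilon$; and this last point lies in a cell of $G^{h'}$ by definition.

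Step 2 pushes the resulting inclusions through the canonical projections $\mathbb{R}^n \times \mathbb{R}^n \to \mathbb{R}^n$ onto the first and second factors, producing the two commutative ``ladder'' diagrams in $\mathsf{Top}^{(bf)}$ that are direct analogues of the pair of inclusion diagrams used in the proof of \cref{theorem:stability_cech}. Step 3 applies the homology functor, decomposes each level into interval summands of $\rep(A_3(bf))$, and extracts the $(\Int{1,3},\Int{1,3})$-block using the functorial restriction guaranteed by \cref{lemma:funtoriality,lemma:isomorphic_restriction}. These blocks assemble into morphisms $f \colon M^h \to M^{h'}(\varepsilon)$ and $g \colon M^{h'} \to M^h(\varepsilon)$, and the two interleaving identities $g(\varepsilon) \circ f = \varphi_{M^h}(2\varepsilon)$ and $f(\varepsilon) \circ g = \varphi_{M^{h'}}(2\varepsilon)$ follow from the functoriality of $H$ together with the block restriction, applied to the composite inclusions $F^h_r \hookrightarrow F^{h'}_{r+\varepsilon} \hookrightarrow F^h_{r+2\varepsilon}$ and symmetrically.

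The main obstacle is the careful bookkeeping of the constants in Step 1. A naive triangle-inequality estimate over the chain $p \to q \to (s, h(s)) \to (s', h'(s')) \to G^{h'}$ picks up contributions of $i\varepsilon$ (for $p \to q$), $\varepsilon$ (for the grid-cell diameter $q \to (s, h(s))$), and $\varepsilon$ (for the Hausdorff shift), naively overshooting the target shift of $\varepsilon$. The proof must exploit the step-function convention of the embedding, which keeps $M^h$ constant on half-open intervals of length $\varepsilon$ starting at multiples of $\varepsilon$, so that the grid-cell contribution is absorbed into the rounding built into the embedding rather than adding to the effective interleaving shift. This is the sole place where the grid geometry interacts nontrivially with the interleaving argument; once the estimate is settled, the remaining steps are a direct transport of the $\mathsf{Top}^{(bf)} \to \rep(A_3(bf)) \to \mathsf{vect}^\mathbb{R}$ pipeline already used in the \v{C}ech-complex case.
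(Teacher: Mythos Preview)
Your overall architecture matches the paper's proof: obtain inclusions $F_{i\varepsilon} \subset F'_{(i+1)\varepsilon}$ and $F'_{i\varepsilon} \subset F_{(i+1)\varepsilon}$, push them through the projections $p,q$ to get commuting ladders in $\mathsf{Top^{(bf)}}$, apply $H$ and the $(\Int{1,3},\Int{1,3})$-restriction. The divergence, and the gap, is in how you propose to resolve the constant overshoot you correctly flag.

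The step-function embedding does not absorb an extra $\varepsilon$. For $r \in [i\varepsilon,(i+1)\varepsilon)$ one has $M^h_r = M^h_{i\varepsilon}$, and an $\varepsilon$-interleaving demands a map into $M^{h'}_{r+\varepsilon} = M^{h'}_{(i+1)\varepsilon}$; so you genuinely need the inclusion $F_{i\varepsilon} \subset F'_{(i+1)\varepsilon}$. An estimate that only lands in $F'_{(i+2)\varepsilon}$ yields a $2\varepsilon$-interleaving, and piecewise constancy does nothing to improve it---the rounding in the embedding is downward, which goes the wrong way for you.

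The paper resolves the overshoot not via the embedding but by a direct grid-geometry claim: $d_H(\Gr(h),\Gr(h')) \leq \varepsilon$ already implies $d_H(F_\varepsilon^h, F_\varepsilon^{h'}) \leq \varepsilon$. This is exactly where the $d_\infty$ hypothesis is used. If $(s,h(s))$ lies in the $\varepsilon$-cube $C$ and $d_\infty((s,h(s)),(s',h'(s'))) \leq \varepsilon$, then coordinate by coordinate the $\varepsilon$-grid interval of $(s',h'(s'))$ is the same as, or an immediate neighbour of, that of $(s,h(s))$; consequently the cube $C' \ni (s',h'(s'))$ satisfies $d_\infty(q,C') \leq \varepsilon$ for \emph{every} $q \in C$. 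In your chain $q \to (s,h(s)) \to (s',h'(s'))$, the two middle steps should collapse into this single observation that $C' \subset F_\varepsilon^{h'}$ already meets the closed $\varepsilon$-ball around $q$. Once $d_H(F_\varepsilon^h,F_\varepsilon^{h'}) \leq \varepsilon$ is established, the inclusion $F_{i\varepsilon} = (F_\varepsilon^h)_{i\varepsilon} \subset (F_\varepsilon^{h'})_{(i+1)\varepsilon} = F'_{(i+1)\varepsilon}$ is immediate, and the rest of your Steps~2--3 go through exactly as in the paper.
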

\begin{proof}
  The assumption $d_H (\Gr(h),\Gr(h')) \leq \varepsilon$ derives the inequality $d_H (F_\varepsilon^h ,F_\varepsilon^{h'}) \leq \varepsilon$.
  Therefore there exist the inclusions
  \begin{equation}
    \begin{tikzcd}
      p(F'_{(i+1)\varepsilon}) & \lar F'_{(i+1)\varepsilon} \rar & q(F'_{(i+1)\varepsilon})\\
      p(F_{i\varepsilon}) \uar[hook] & \lar F_{i\varepsilon} \uar[hook] \rar & q(F_{i\varepsilon}) \uar[hook]
    \end{tikzcd}
  \end{equation}
  and
  \begin{equation}
    \begin{tikzcd}
      p(F_{(i+1)\varepsilon}) & \lar F_{(i+1)\varepsilon} \rar & q(F_{(i+1)\varepsilon})\\
      p(F'_{i\varepsilon}) \uar[hook] & \lar F'_{i\varepsilon} \uar[hook] \rar & q(F'_{i\varepsilon}) \uar[hook]
    \end{tikzcd}.
  \end{equation}
  It is clear that these inclusions induce the $\varepsilon$-interleaving morphisms between $M^h$ and $M^{h'}$.
\end{proof}

\section{Application of functoriality to 2-D persistence modules}\label{section:Funtoriality_Other_Intervals}
The functoriality lemma, \cref{lemma:funtoriality}, can be generalized for the restriction to every ``diagonal'' block.
Precisely, since the candidate of intervals $\intv[c,d]$ satisfying relations $\intv[a,b] \reltoeq \intv[c,d] \reltoeq \intv[a,b]$ is only $\intv[c,d] = \intv[a,b]$, 
\begin{equation}
    \left[ \Psi \Theta \right] \itoi{a,b,a,b} = \Psi\itoi{a,b,a,b} \Theta\itoi{a,b,a,b}.
\end{equation}
holds for all $\intv[a,b]$.
This result can be checked easily, not only on the orientation $bf$ but also on every orientation of any length, as follows.

Suppose $\intv[c,d] \neq \intv[a,b]$, which can happen when $a \neq c$ or when $b \neq d$.
In the case that $a \neq c$, we may assume $a < c$ without loss of generality.
When $(c-1)$-th orientation is $f$, consider $g = \set{g_i}_{i=1}^n \in \Hom(\intv[a,b], \intv[c,d])$.
Then, the commutative diagram of the morphism $g$ from $(c-1)$ to $c$ is
\begin{equation}
  \begin{tikzcd}
      0 \rar[rightarrow, "\id_K"] & K \\
      K \rar[rightarrow, "\id_K"] \uar[rightarrow, "g_{c-1}"] & K \uar[rightarrow, swap, "g_c"]
  \end{tikzcd}.
\end{equation}
It is obvious that $g_{c-1} = 0$, and the commutativity derives $g_c = 0$.
Since the commutativity of the diagram on $g$ derives $g_i = 0$ for the other vertices $i$, $g = 0$.
Consequently, $\Hom(\intv[a,b], \intv[c,d]) = 0$, hence $\intv[a,b] \not\reltoeq \intv[c,d]$.
We can show $\intv[c,d] \not\reltoeq \intv[a,b]$ when $(c-1)$-th orientation is $b$ in a similar discussion, using the commutative diagram
\begin{equation}
  \begin{tikzcd}
    K \rar[leftarrow, "\id_K"] & K \\
    0 \rar[leftarrow, "\id_K"] \uar[rightarrow, "g_{c-1}"] & K \uar[rightarrow, swap, "g_c"]
  \end{tikzcd}.
\end{equation}
Similar arguments also hold in the case that $b \neq d$, concluding $\intv[a,b] \not\reltoeq \intv[c,d]$ or $\intv[c,d] \not\reltoeq \intv[a,b]$.

That is why we can extend the statement on the orientation $bf$ to general $\tau_n$ as follows.
\begin{proposition}\label{proposition:general_funtoriality}
  Let
  \begin{equation}
    \Theta \colon \bigoplus_{1\leq a \leq b \leq n} \intv[a,b]^{m_{a,b}^1} \to \bigoplus_{1\leq a \leq b \leq n} \intv[a,b]^{m_{a,b}^2}
  \end{equation}
  and
  \begin{equation}
    \Psi \colon \bigoplus_{1\leq a \leq b \leq n} \intv[a,b]^{m_{a,b}^2} \to \bigoplus_{1\leq a \leq b \leq n} \intv[a,b]^{m_{a,b}^3} 
  \end{equation}
  be block matrix forms of objects in the arrow category $\arr(\rep(A_n(\tau_n)))$, then
  \begin{equation}
    \left[ \Psi \Theta \right] \itoi{a,b,a,b} = \Psi\itoi{a,b,a,b} \Theta\itoi{a,b,a,b}
  \end{equation}
  for all $1 \leq a \leq b \leq n$.
\end{proposition}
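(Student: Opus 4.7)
The plan is to mimic the proof of \cref{lemma:funtoriality} essentially verbatim, with the only modification being that we replace the interval $\intv[1,3]$ by an arbitrary interval $\intv[a,b]$ and invoke the already-established fact (shown in the paragraphs preceding the proposition) that the ``self-relation'' $\intv[a,b] \reltoeq \intv[c,d] \reltoeq \intv[a,b]$ forces $\intv[c,d] = \intv[a,b]$ for every orientation $\tau_n$.

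First I would write out the block-matrix product at position $(\Int{a,b},\Int{a,b})$ using the multiplication formula for block matrix forms recalled in \cref{subsection:construction_using_simplicial_complexes}. Letting $M$ and $N$ denote the scalar matrix symbols of $\Theta$ and $\Psi$ respectively, this gives
\begin{equation}
  \left[ \Psi \Theta \right] \itoi{a,b,a,b}
  = \sum_{\intv[a,b] \reltoeq \intv[c,d] \reltoeq \intv[a,b]}
    \bigl( N\itoi{c,d,a,b} f\itoi{c,d,a,b} \bigr)
    \bigl( M\itoi{a,b,c,d} f\itoi{a,b,c,d} \bigr).
\end{equation}
Next I would apply the observation that, by the paragraph immediately preceding the proposition, the indexing condition $\intv[a,b] \reltoeq \intv[c,d] \reltoeq \intv[a,b]$ is satisfied only by $(c,d) = (a,b)$, regardless of $\tau_n$. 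Hence only one summand survives and the product reduces to $(N\itoi{a,b,a,b} f\itoi{a,b,a,b})(M\itoi{a,b,a,b} f\itoi{a,b,a,b}) = \Psi\itoi{a,b,a,b} \Theta\itoi{a,b,a,b}$, which is exactly the desired equality.

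There is no real obstacle in this argument; the only thing to be slightly careful about is making sure that the preliminary case analysis on $\tau_n$ (separately handling $a < c$ versus $b > d$, and orientations $f$ versus $b$ at the offending position) is genuinely exhaustive, so that the ``collapse to a single diagonal summand'' property is invoked in full generality. Once that is verified, the proof becomes a one-line specialization of \cref{lemma:funtoriality}.
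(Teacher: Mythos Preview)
Your proposal is correct and follows essentially the same approach as the paper: the paper establishes in the paragraphs preceding the proposition that $\intv[a,b] \reltoeq \intv[c,d] \reltoeq \intv[a,b]$ forces $(c,d)=(a,b)$ for any $\tau_n$, and then the proposition follows by the same block-matrix-product computation as in \cref{lemma:funtoriality}, exactly as you outline. One minor correction: the block-matrix multiplication formula is recalled in the preliminaries (Subsection~2.2), not in \cref{subsection:construction_using_simplicial_complexes}.
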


This property can be utilized for \emph{2-D persistence modules}, which are representations with the shape
\begin{equation}\label{eq:2Dpersistence}
  \begin{tikzcd}
    M_{n_1, 1} \rar[leftrightarrow] & M_{n_1, 2} \rar[leftrightarrow] & \cdots \rar[leftrightarrow] & M_{n_1, n_2} \\
    \vdots \uar & \vdots \uar & & \vdots \uar \\
    M_{2, 1} \uar \rar[leftrightarrow] & M_{2, 2} \uar \rar[leftrightarrow] & \cdots \rar[leftrightarrow] & M_{2, n_2} \uar \\
    M_{1, 1} \uar \rar[leftrightarrow] & M_{1, 2} \uar \rar[leftrightarrow] & \cdots \rar[leftrightarrow] & M_{1, n_2} \uar 
  \end{tikzcd}
\end{equation}
where every row has the same orientation $\tau_{n_2}$.
The 2-D persistence modules sometimes appear and cause problems in the context of persistence analysis for time series data.
See~\cite{multidimensional_persistence} for details and higher dimensional persistence.

In our context, the 2-D persistence module naturally appears when we consider iterations of a sampled map or compositions of sampled maps.
Suppose we have a time series of some point clouds $\set{S_1, S_2, \dots, S_T}$ in the same Euclidean space, with their transition as maps $\set{f_i \colon S_i \to S_{i + 1}}$.
We generate a filtration of abstract simplicial complexes $C_1^t \subset \dots \subset C_n^t$ for each $S_t$.
As we saw in \cref{section:persistence_sampled_maps}, the maps between points induce a filtration of partial maps $f_i^t \colon C_i^t \nrightarrow C_i^{t + 1}$, which induces a commutative diagram
\begin{equation}\label{eq:iteration_diagram_of_sampled_map}
  \begin{tikzcd}
    \vdots & \vdots & \vdots & & \vdots \\
    C_{i+1}^1 \uar[hook'] & G_{i+1}^1 \lar \rar \uar[hook'] & C_{i+1}^2 \uar[hook'] & \cdots \lar \rar & C_{i+1}^T \uar[hook'] \\
    C_i^1 \uar[hook'] & G_i^1 \lar \rar \uar[hook'] & C_i^2 \uar[hook'] & \cdots \lar \rar & C_i^T \uar[hook'] \\
    \vdots \uar[hook'] & \vdots \uar[hook'] & \vdots \uar[hook'] & & \vdots \uar[hook']
  \end{tikzcd}
\end{equation}
by taking the $i$-th simplicial complex $G_i^t$ of $\Gr(f_i)$ defined as \cref{eq:def_of_G_i}.
As a consequence, the homology functor induces the 2-D persistence module from the above diagram.
In this case, we can observe the 2-D persistence module from the viewpoint that the horizontal (vertical) direction on the diagram describes persistence in time (space, respectively).

Let us go back to Diagram~(\ref{eq:2Dpersistence}).
In the same way as in the specific case $\tau_{n_2} = bf$, Diagram~(\ref{eq:2Dpersistence}) can be regarded as a sequence of morphisms in the category $\rep(A_{n_2}(\tau_{n_2}))$.
By decomposing representations of $A_{n_2}(\tau_{n_2})$ in each row, we can deal with the morphisms as matrices in block matrix form.
Restricting each matrix to the diagonal block $(\Int{a, b},\Int{a, b})$ derives a sequence of matrices whose domains and codomains are direct sums of $\intv[a, b]$.
As this sequence is $b-a$ copies of nonzero representations of $A_{n_1}$ and $n_2-(b-a)$ copies of zero representations, we can take one of the nonzero representations.
Finally, we obtain the persistence diagram by decomposing.
\Cref{proposition:general_funtoriality} ensures the uniqueness of the output persistence diagram.

In the case of 2-D persistence modules derived from Diagram~(\ref{eq:iteration_diagram_of_sampled_map}), when we take the block $(\Int{a, b},\Int{a, b})$ as $(\Int{1,n_2},\Int{1,n_2})$, each generator of the output persistence module survives under all transitions, and its lifetime in the persistence diagram shows how robust it is in the Euclidean space.
Although this process ignores much information stored in the other blocks, it is an approach to 2-D persistence analysis that can capture the rough topological structures.

\section{Numerical experiments}\label{section:numerical_example}
The author has implemented the persistence analysis in \cref{subsection:construction_using_simplicial_complexes}.
Here we fix the field for the coefficient of matrices and the homology functor as $\mathbb{Z}/1009\mathbb{Z}$.
\begin{remark}
  To implement the persistence analysis on computers, we must use finite fields as the coefficient.
  Here, every map is written as a matrix.
  If we choose the field $\mathbb{Z}/2\mathbb{Z}$ as the coefficient, then every entry with the prime factor $2$ in the matrix is regarded as $0$.
  For example, a homology generator $a$ mapped as $f_*(a) = 2a$, such as the example discussed later and in \cref{figure:circle100_2_018}, is ignored.
  Therefore, it is better to choose a larger prime number $p$ as the coefficient $\mathbb{Z}/p\mathbb{Z}$ to retrieve more generators.
\end{remark}
The implementation uses Vietoris--Rips complexes for simplicity, while \v{C}ech complexes are theoretically more satisfying (see~\cite[Section III.2]{computational_topology_frog_book} for details).
Except for the construction of the persistence module from the sequence of the pairs of the maps $\set{({p_i}\s, {q_i}\s)}$, it basically follows the algorithm in~\cite{dynamical_system} (recall \cref{remark:comparison_dom_and_E}).

First, we generate the boundary matrix induced by the filtration of the Vietoris--Rips complexes for each point cloud $S$ and $f(S)$, and then the boundary matrix of the filtration $\set{G_i}$.
We can use the original persistence algorithm~\cite{computational_topology_frog_book} to compute the reduced boundary matrices and the bases of the persistent homology of the filtration.

Second, since we can obtain the homology bases for each filtration, we generate the maps ${p_i}\s$ and ${q_i}\s$ as matrices between the homology basis for each filter.
In the same way, we compute the induced maps of the inclusions $j\s \colon HG_i \to HG_{i+1}$ as matrices.
To obtain the basis of $\intv[1,3]$ for each matrix ${p_i}\s$ and ${q_i}\s$, we execute the following elementary row and column operations.
\begin{enumerate}
  \item Transform ${p_i}\s$ to Smith normal form:
  \begin{equation}
    P_1 {p_i}\s Q_1 = 
    \left[
    \begin{array}{cc}
      I_{r_1} & 0 \\
      0 & 0
    \end{array}
    \right],
  \end{equation}
  where $P_1$ and $Q_1$ are regular matrices corresponding to elementary operations, and $r_1$ is the rank of ${p_i}\s$.
  \item Since ${p_i}\s$ and ${q_i}\s$ share the same basis for the columns, the elementary column operations $Q_1$ are simultaneously performed on ${q_i}\s$
  \begin{equation}
    {q_i}\s Q_1 = 
    \left[
    \begin{array}{cc}
      X_1 & X_2
    \end{array}
    \right],
  \end{equation}
  where $X_1$ is the submatrix on the basis of columns corresponding to the above $I_{r_1}$, and $X_2$ is the submatrix corresponding to the $0$ columns in $P_1 {p_i}\s Q_1$.
  \item Transform $X_2$ to Smith normal form with elementary row operations $P_2$ and elementary column operations $Q_2$
  \begin{equation}
    \left[
    \begin{array}{cc}
      P_2 X_1 & \begin{array}{cc} I_{r_2} & 0 \\ 0 & 0 \end{array}
    \end{array}
    \right]
    =
    \left[
    \begin{array}{ccc}
      X_3 & I_{r_2} & 0 \\
      X_4 & 0 & 0
    \end{array}
    \right]
  \end{equation}
  where $P_2 X_1$ is divided into submatrices of appropriate sizes on the right-hand side.
  We remark that these column operations have no side effect on the ${p_i}\s$ side matrix since every column corresponding to the basis is zero.
  \item Zero out $X_3$ by $I_{r_2}$ using column operations without any side effect:
  \begin{equation}
    \left[
    \begin{array}{ccc}
      0 & I_{r_2} & 0 \\
      X_4 & 0 & 0
    \end{array}
    \right].
  \end{equation}
  \item Transform $X_4$ to Smith normal form with elementary row operations $P_4$ and elementary column operations $Q_4$:
  \begin{equation}
    \left[
    \begin{array}{ccc}
      0 & I_{r_2} & 0 \\
      \begin{array}{cc} I_{r_3} & 0 \\ 0 & 0 \end{array} & 0 & 0
    \end{array}
    \right].
  \end{equation}
  Here the column operations have side effect on $I_{r_1}$ transforming it to a matrix $Q_4$, but $Q_4$ is regular, hence we can transform it to $I_{r_1}$ again using only row operations.
  \item Finally, we obtain the matrix transformations
  \begin{equation}
    {p_i}\s \mapsto 
    \left[
    \begin{array}{cccc}
      I_{r_3} & 0 & 0 & 0 \\
      0 & I_{r_3 - r_1} & 0 & 0
    \end{array}
    \right]
    \text{ and }
    {q_i}\s \mapsto 
    \left[
    \begin{array}{cccc}
      0 & 0 & I_{r_2} & 0 \\
      I_{r_3} & 0 & 0 & 0 \\
      0 & 0 & 0 & 0
    \end{array}
    \right],
  \end{equation}
  which are decomposed into intervals.
  The rows and columns corresponding to two $I_{r_3}$ are pairs of identity maps, which are $\intv[1,3]$.
  Therefore the basis in the columns of $I_{r_3}$ is what we want.
\end{enumerate}

Applying the change of basis of $HG_i$ during the above column operations and restricting to the basis corresponding to $I_{r_3}$, we finally obtain the persistent homology of the sampled map.
At last, we decompose the persistent homology into intervals using the decomposition algorithm in~\cite[Subsection 3.4]{dynamical_system} and plot a persistence diagram.

By tracking the inverse of the matrix transformations executed above, we can write down the cycles corresponding to the generators in the persistence diagram.
As we remarked in \cref{section:induced_maps}, the cycles depend on the choice of the bases of $\intv[1,3]$.
Nevertheless, in the following numerical experiments, we succeed in reconstructing underlying maps.

\subsection{Twice mapping on a circle}
As an example for input data, let us consider the twice map on the unit circle $f \colon S^1 \to S^1$ defined by $f(z) := z^2$.
Note that, in this case, the spaces in \cref{problem:dynamical} are given as $X = Y = S^1$ embedded in $\mathbb{R}^2$, and we regard $\mathbb{R}^2$ as $\mathbb{C}$.
The sampled points of the unit circle are 100 points $z_j := \cos(2\pi\frac{j}{100}) + \sqrt{-1}\sin(2\pi\frac{j}{100})$ for $0 \leq j < 100$, with added Gaussian noise with $\sigma \in [0.00, 0.30]$.

\begin{figure}[t]
		\centering
		\includegraphics{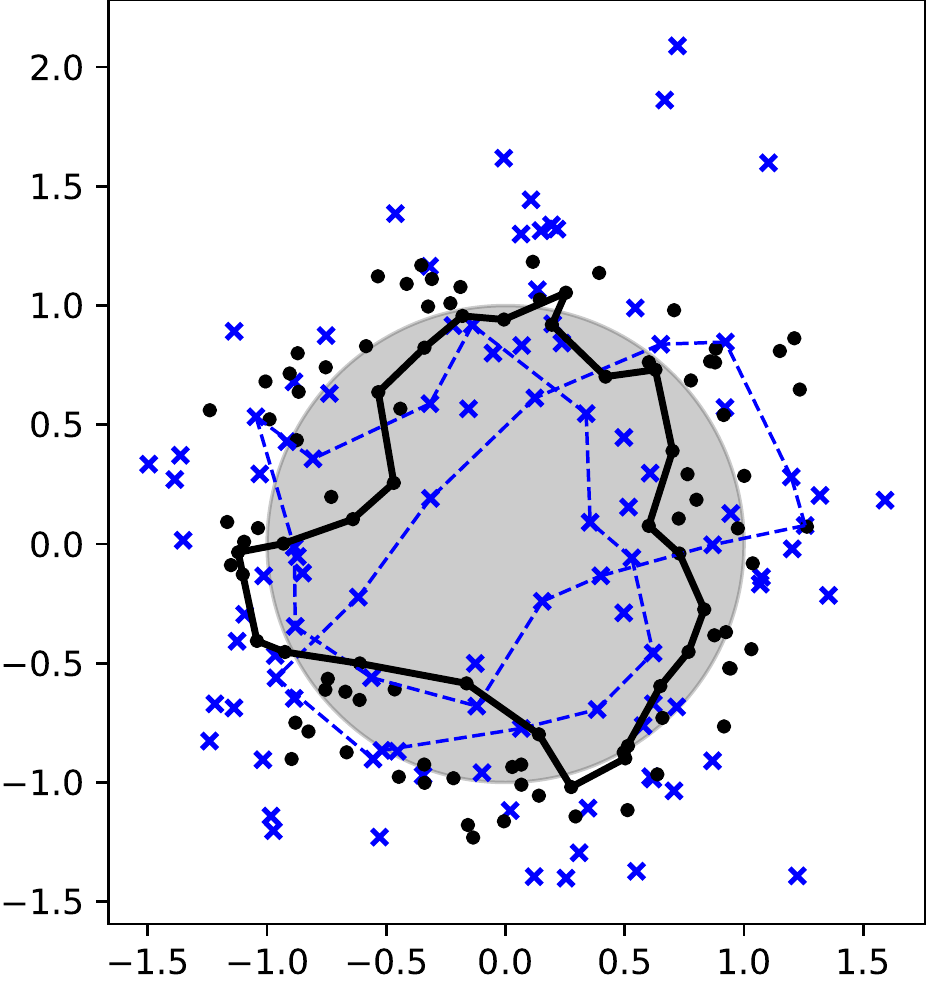}
		\caption{A computational result for $f(z) = z^2$. The number of points is $100$ and the Gaussian noise is at $\sigma = 0.18$. The black points are sampled points for the domain and the blue crosses are its image by $f$. As presented in \cref{figure:pd100_2}, the generator is unique. The corresponding generator in the domain side is described by the black edges approximating the unit circle. The generator in the image side is the blue dashed edges, and we can observe that it turns around the origin twice.} \label{figure:circle100_2_018}
\end{figure}
\begin{figure}[h]
		\centering
		\includegraphics[width=0.75\hsize]{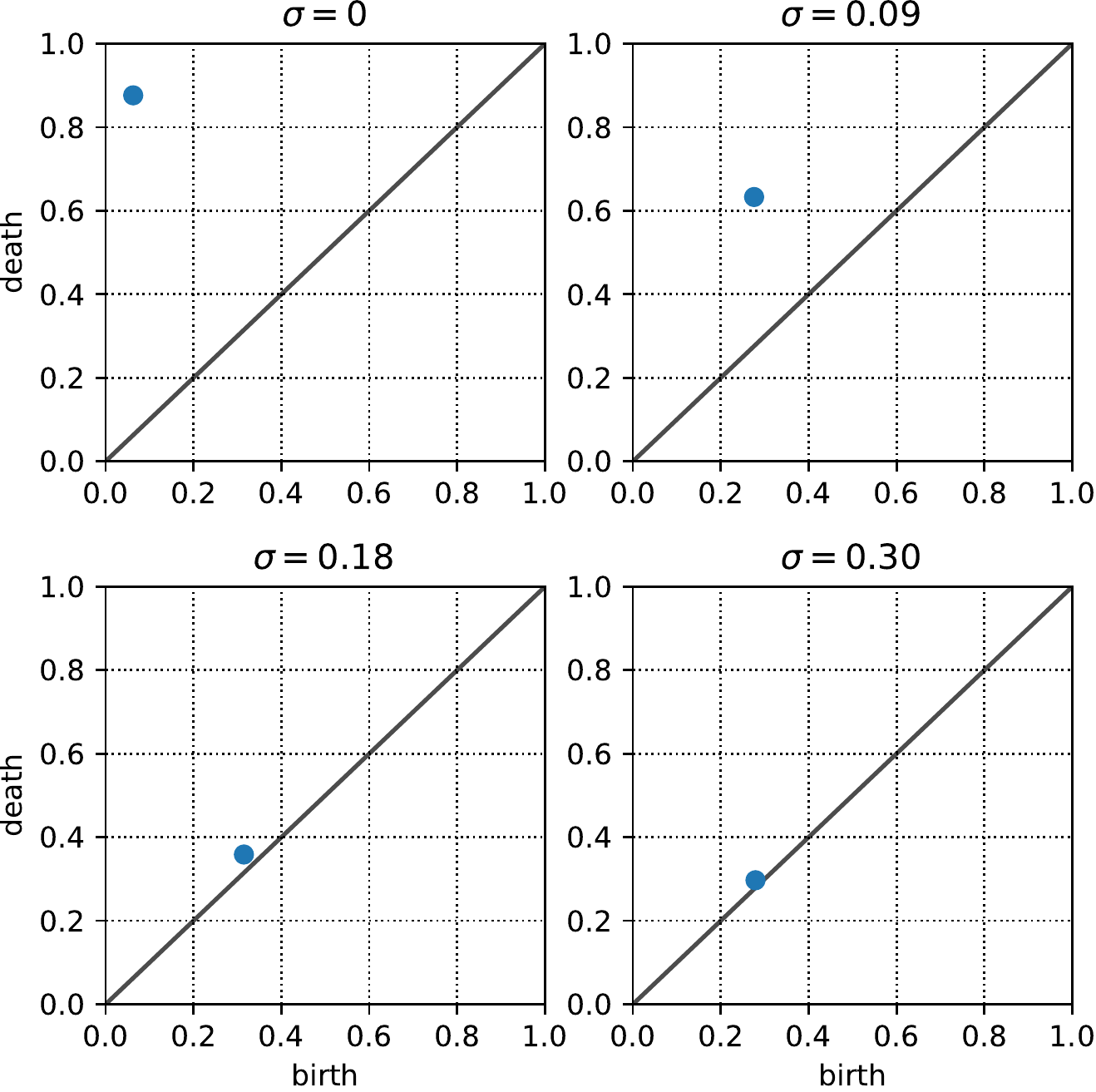}
		\caption{The persistence diagrams for $f(z) = z^2$ with 100 points at $\sigma = 0.00$, $0.09$, $0.18$, and $0.30$. We can observe that each persistence diagram has the unique point, and it goes to the diagonal line as the noise increases.} \label{figure:pd100_2}
		\includegraphics[width=0.75\hsize]{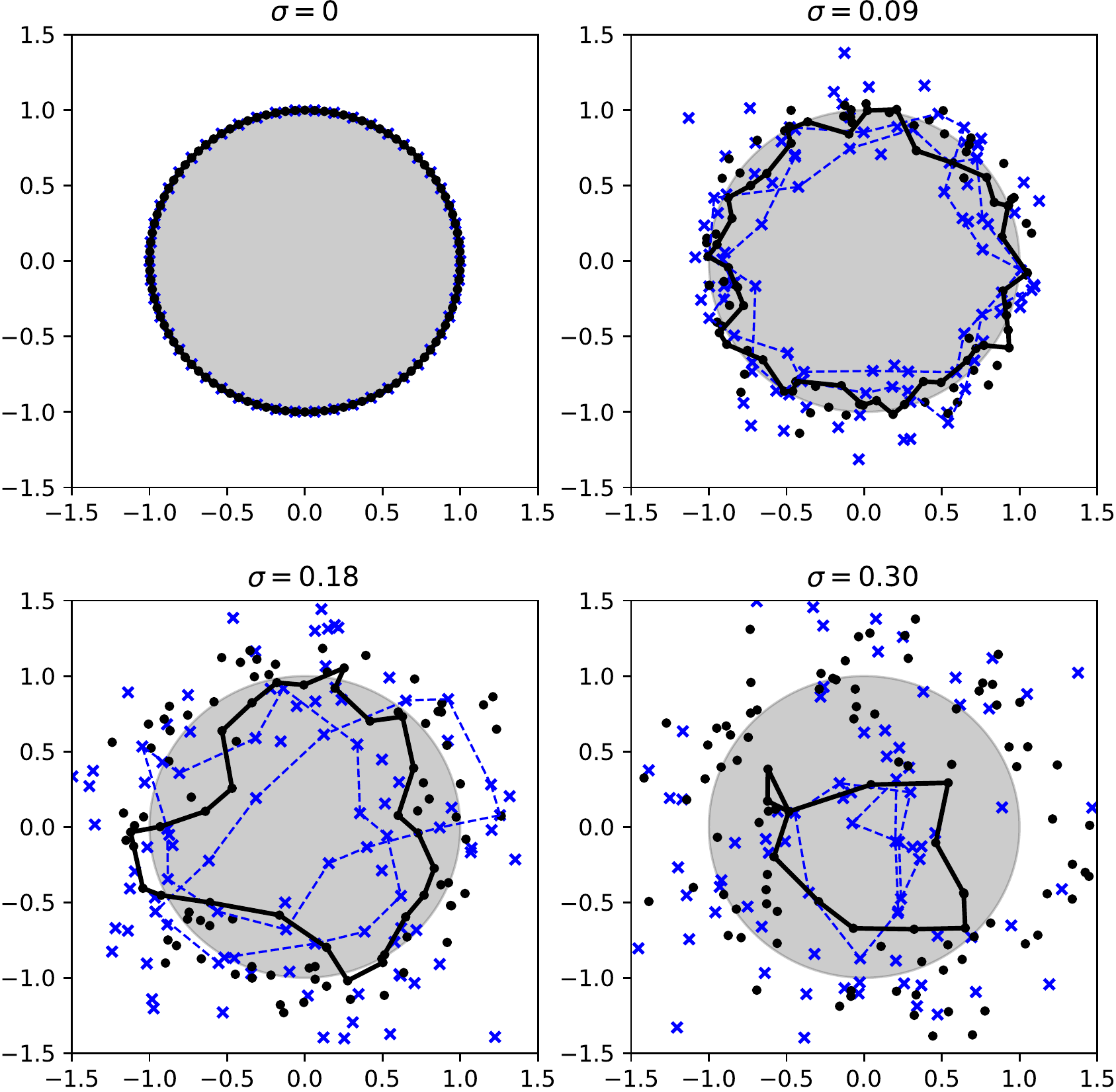}
		\caption{Computational results for $f(z) = z^2$ with 100 points at $\sigma = 0.00$, $0.09$, $0.18$, and $0.30$. Every plotting range is restricted to $[-1.5, 1.5] \times [-1.5, 1.5]$ to observe the generators. The corresponding persistence diagrams are in \cref{figure:pd100_2}.} \label{figure:circle100_2_all}
\end{figure}

A computational result is presented in \cref{figure:circle100_2_018}, which portrays the sampled map at $\sigma = 0.18$ and its unique generator of the persistence diagram.
The generator is the corresponding cycle in $HG_{b}$ at the birth radius $b$ and is indeed approximating the unit circle, and we can see that its image is turning around the origin twice.
Results for other noises are shown in \cref{figure:circle100_2_all}.

\Cref{figure:pd100_2} presents the persistence diagrams under changing $\sigma$ from $0$ to $0.3$.
As expected, the lifetime of the unique generator decreases as the noise increases.

\subsection{Inverse mapping on a circle}
To emphasize the difference from the existing method using eigenspace functors, let us consider the inverse map on the unit circle $g \colon S^1 \to S^1$ defined by $f(z) := z^{-1}$.
The sampled points $\set{z_j}$ and the range of the parameter $\sigma$ of Gaussian noises are the same as before.
The computational results are presented in \cref{figure:circle100_-1_all,figure:pd100_-1}.

The analysis using eigenspace functors can detect such a generator using the eigenspace functor with eigenvalue $-1$.
In other words, prior knowledge about the eigenvalue is essential.
On the other hand, our method can use the same construction both for the inverse mapping and twice mapping.

\begin{figure}[h]
		\centering
		\includegraphics[width=0.75\hsize]{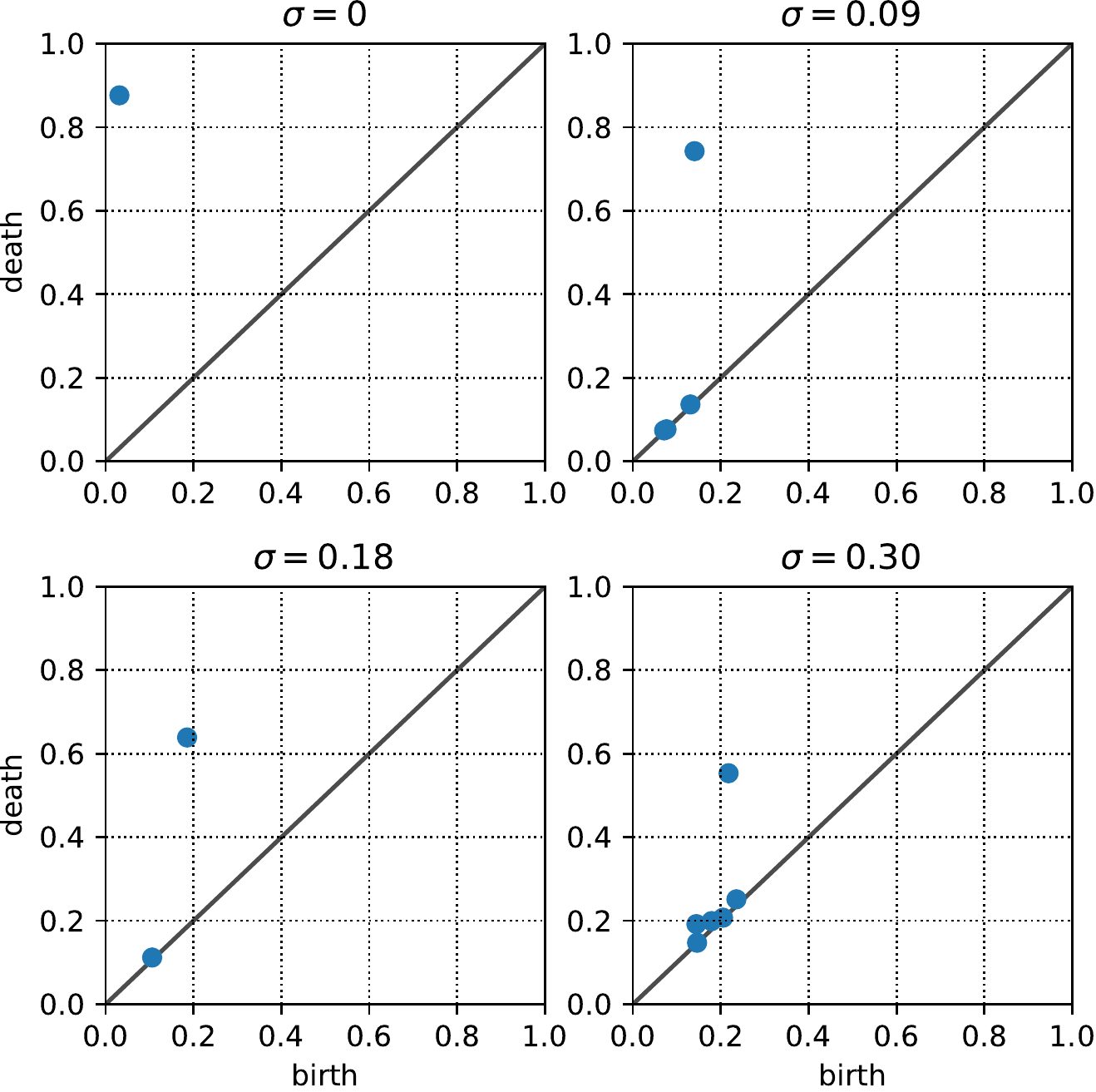}
		\caption{The persistence diagrams for $f(z) = z^{-1}$ with 100 points at $\sigma = 0.00$, $0.09$, $0.18$, and $0.30$.} \label{figure:pd100_-1}
		\includegraphics[width=0.75\hsize]{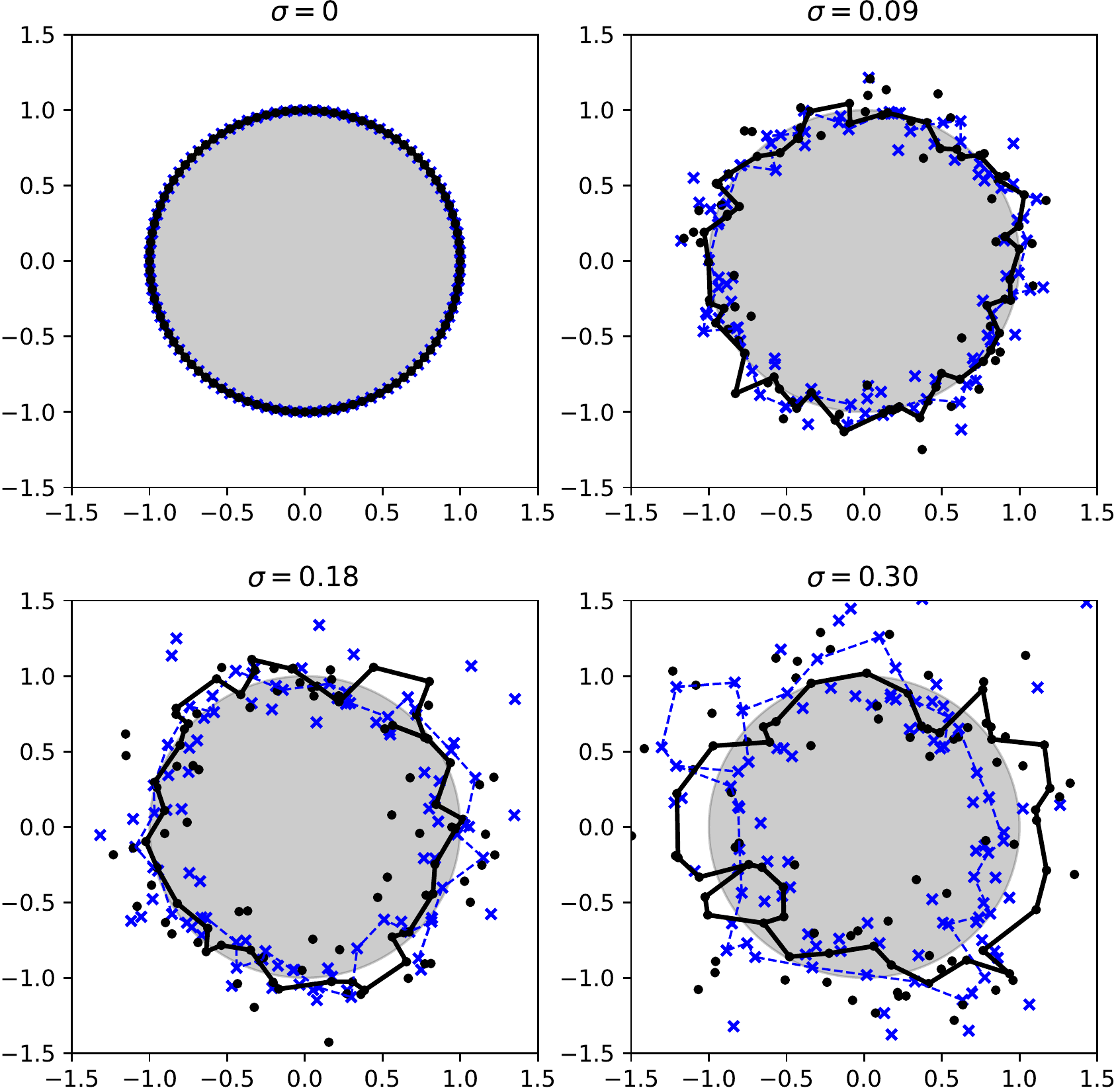}
		\caption{Computational results for $f(z) = z^{-1}$ with 100 points at $\sigma = 0.00$, $0.09$, $0.18$, and $0.30$. The generators correspond to the most persistent birth-death pair in each persistence diagram in \cref{figure:pd100_-1}.} \label{figure:circle100_-1_all}
\end{figure}

\subsection{Mapping on a torus}
Moreover, our method can be applied to maps on the torus $T := \mathbb{R}^2/\mathbb{Z}^2$.
We adopt the metric on $T$ induced by the Euclidean metric on $\mathbb{R}^2$.

Let us consider the self-map on $T$ defined as
\begin{equation}
  A = 
  \begin{pmatrix}
    2 & 1 \\
    1 & 1 \\
  \end{pmatrix}
  \colon T \to T.
\end{equation}
We put 64 sampling points $S := \set{ (\frac{i}{8} , \frac{j}{8}) | \text{$0 \leq i \leq 7$, $0 \leq j \leq 7$}}$ and generate a sampled map of $A$ on $S$.
Such a sampled map of $A$ is a challenging example for the analysis using eigenspace without any prior knowledge because the eigenvalues of $A$ are $\frac{3+\sqrt{5}}{2}$ and $\frac{3-\sqrt{5}}{2}$.

The computational results are presented in \cref{figure:torus_pd,figure:torus_gen_0,figure:torus_gen_0_0,figure:torus_gen_0_1,figure:torus_gen_1,figure:torus_gen_1_0,figure:torus_gen_1_1}.
We remark that the unique point in \cref{figure:torus_pd} has multiplicity $2$.
Let
$  \begin{pmatrix}
    1 \\
    0 \\
  \end{pmatrix}$
and
$  \begin{pmatrix}
    0 \\
    1 \\
  \end{pmatrix}$
be a standard homology basis on the torus.
The generators corresponding to the unique birth-death point are given by $\alpha$ and $505\alpha + \beta$ (=$\frac{1}{2}\alpha + \beta$ with the coefficient $\mathbb{Z}/1009\mathbb{Z}$) in our numerical experiment, where $\alpha$ and $\beta$ are cycles in $HG_b$ at the birth radius $b$ illustrated in \cref{figure:torus_gen_0,figure:torus_gen_0_0,figure:torus_gen_0_1} and \cref{figure:torus_gen_1,figure:torus_gen_1_0,figure:torus_gen_1_1}, respectively.
These cycles correspond to the mappings
\begin{equation}\label{eq:mappings_torus}
  \begin{pmatrix}
    -1 \\
    0 \\
  \end{pmatrix}
  \mapsto
  \begin{pmatrix}
    -2 \\
    -1 \\
  \end{pmatrix}\text{ and }
  \begin{pmatrix}
    0 \\
    1 \\
  \end{pmatrix}
  \mapsto
  \begin{pmatrix}
    1 \\
    1 \\
  \end{pmatrix},
\end{equation}
respectively.
The mappings~\eqref{eq:mappings_torus} are nothing but the map $A= 
  \begin{pmatrix}
    2 & 1 \\
    1 & 1 \\
  \end{pmatrix}
$, concluding the success in reconstructing $A$.

\begin{figure}[h]
  	\centering
  	\includegraphics[width=0.48\hsize]{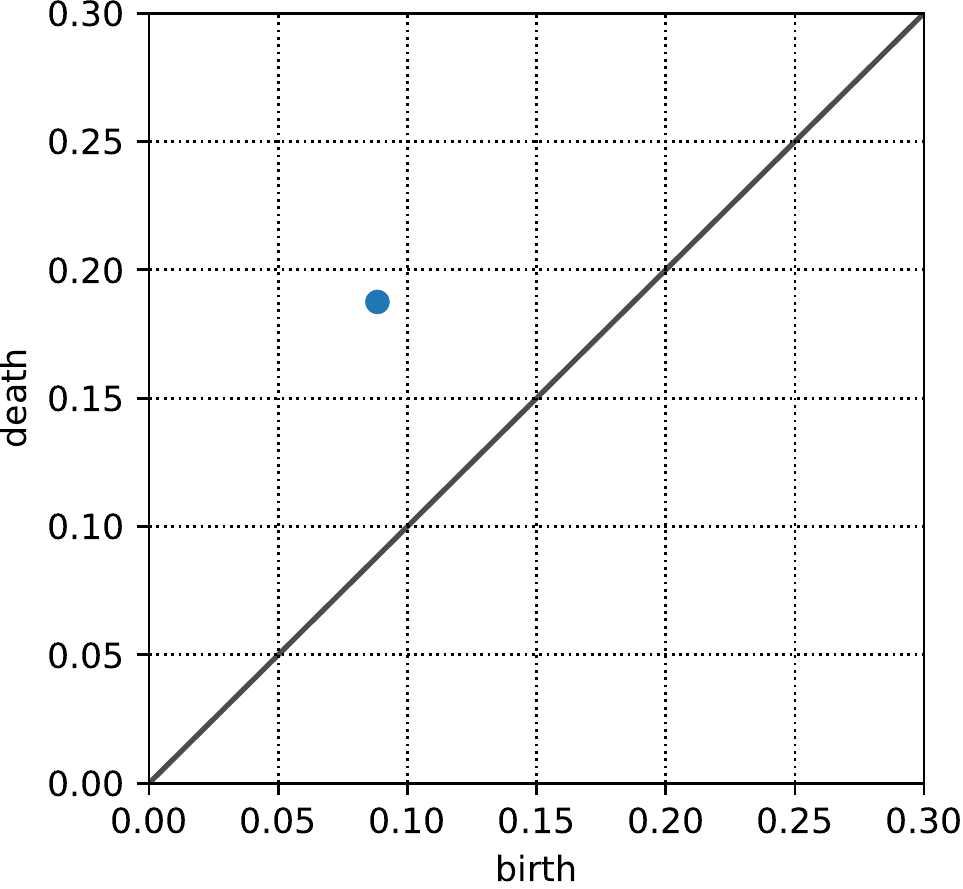}
  	\caption[Text excluding the matrix]{The persistence diagram of a sampled map of $A = \begin{pmatrix} 2 & 1 \\ 1 & 1 \\ \end{pmatrix}$. The unique point has multiplicity 2.} \label{figure:torus_pd}
\end{figure}
\begin{figure}[h]
  \centering
  \begin{minipage}[t]{0.7\hsize}
  	\centering
  	\includegraphics[width=0.6\hsize]{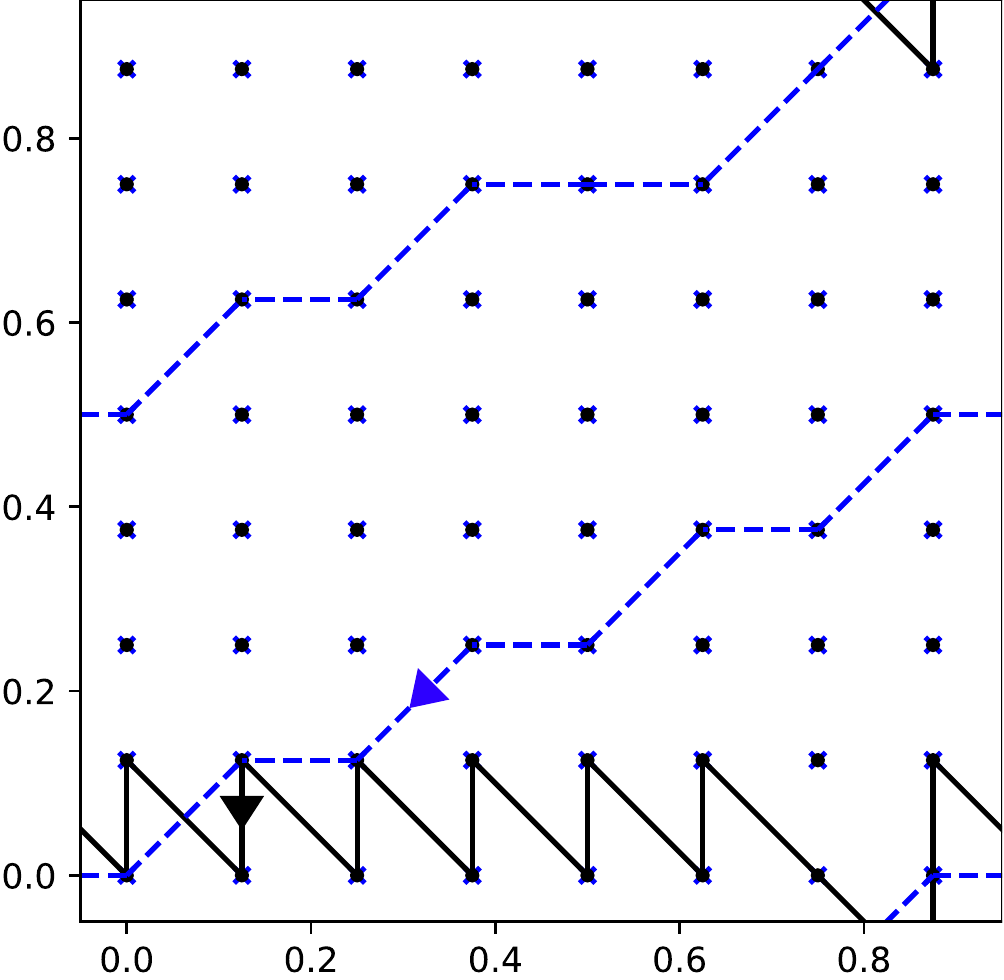}
  	\caption[Text excluding the matrix]{A homology generator $\alpha$ on the torus corresponding to the point in \cref{figure:torus_pd}. The black line is the cycle in domain and mapped to the blue dashed line. Using the standard basis, this mapping is written as $\begin{pmatrix} -1 \\ 0 \\ \end{pmatrix} \mapsto \begin{pmatrix} -2 \\ -1 \\ \end{pmatrix}$.} \label{figure:torus_gen_0}
  \end{minipage}
  \\
  \centering
  \begin{minipage}[t]{0.48\hsize}
  	\centering
  	\includegraphics[width=\hsize]{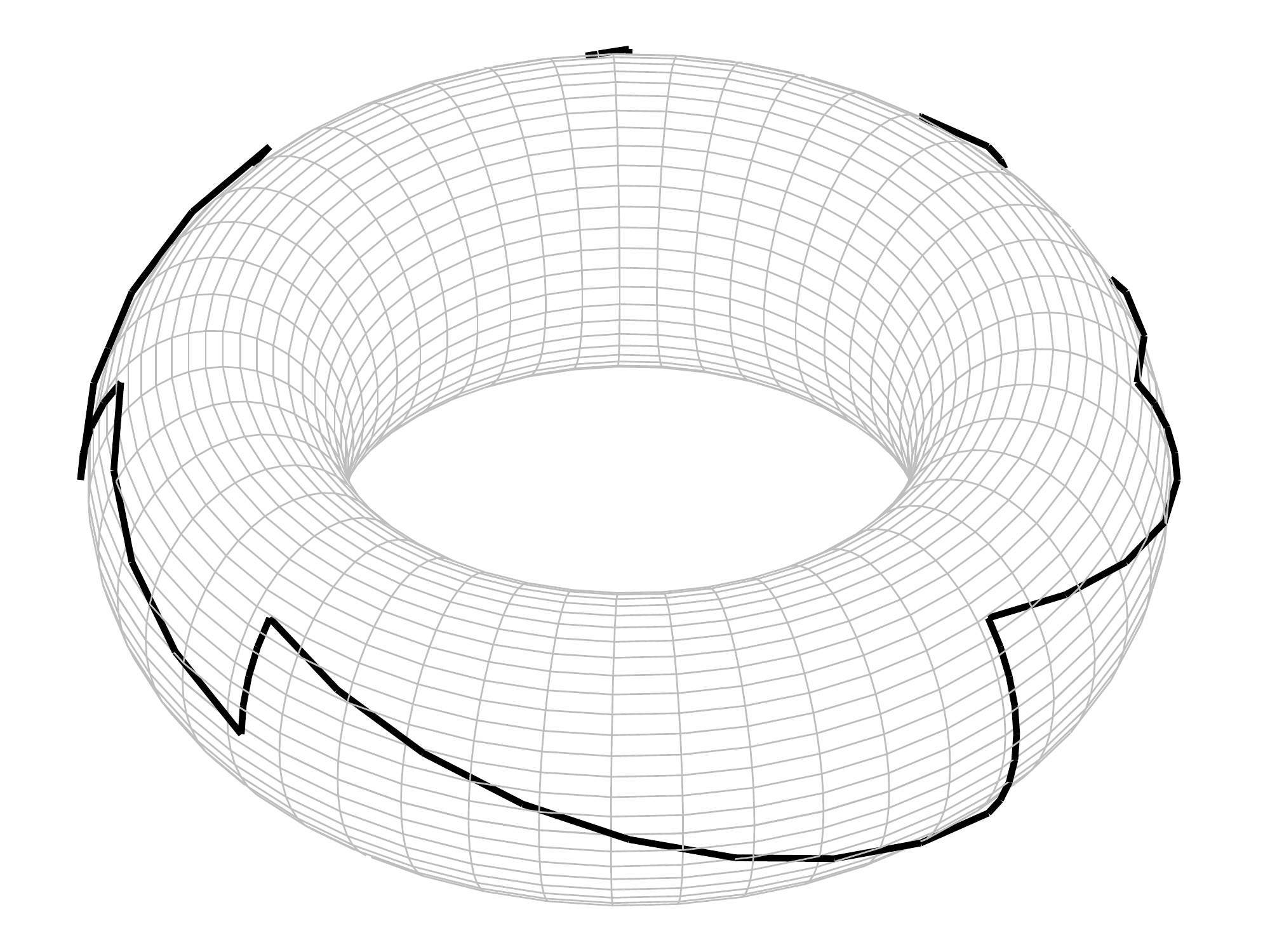}
  	\caption[Text excluding the matrix]{The generator on the domain side embedded in $\mathbb{R}^3$. The loop turns around the torus once in longitudinal direction.} \label{figure:torus_gen_0_0}
  \end{minipage}
  \begin{minipage}[t]{0.02\hsize}
  \end{minipage}
  \begin{minipage}[t]{0.48\hsize}
  	\centering
  	\includegraphics[width=\hsize]{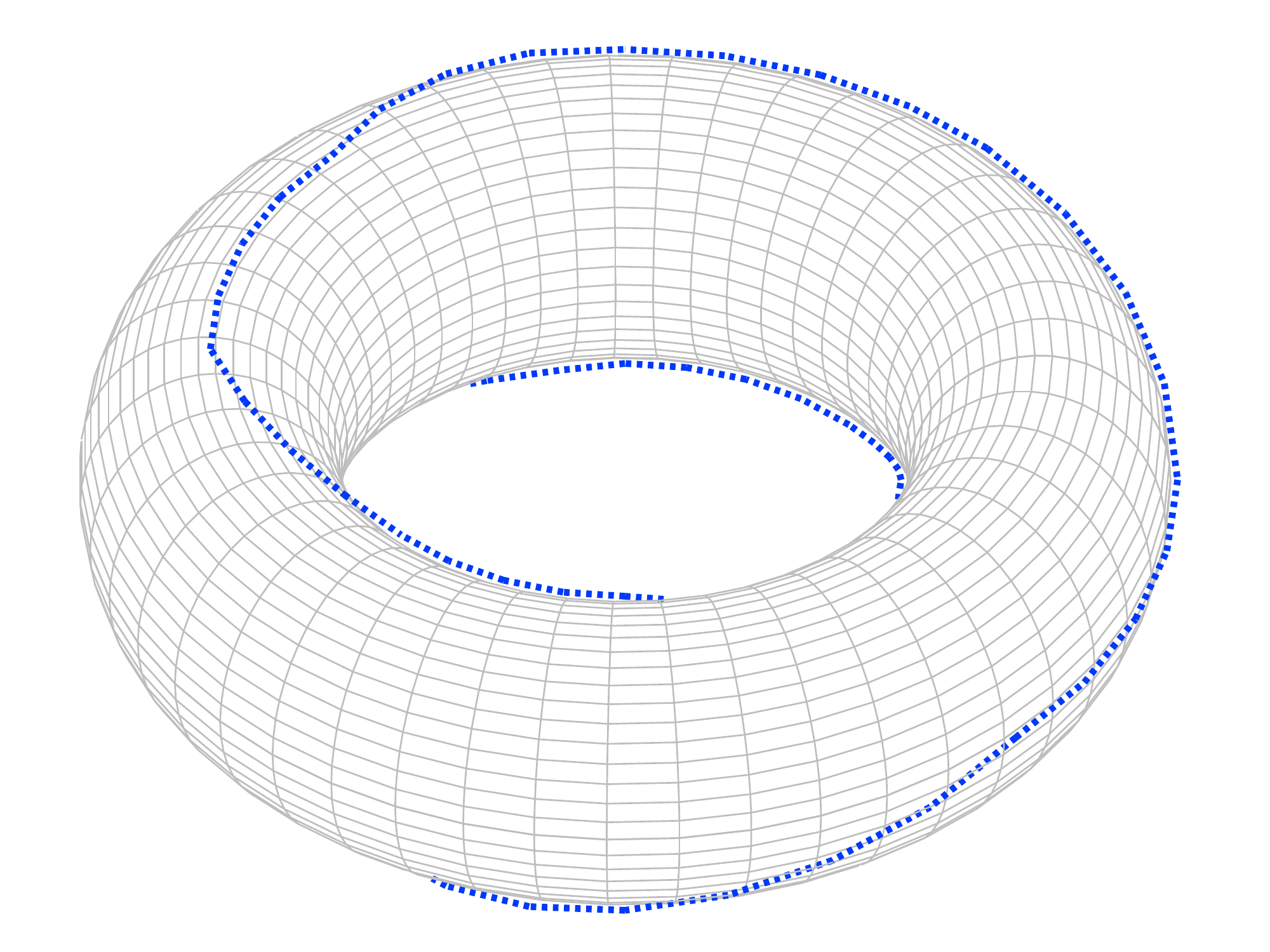}
  	\caption[Text excluding the matrix]{The generator on the image side embedded in $\mathbb{R}^3$. The loop turns around the torus twice in longitudinal and once in meridian direction.} \label{figure:torus_gen_0_1}
  \end{minipage}
\end{figure}
\begin{figure}[h]
  \centering
  \begin{minipage}[t]{0.7\hsize}
  	\centering
  	\vspace{0.05\hsize}
  	\includegraphics[width=0.6\hsize]{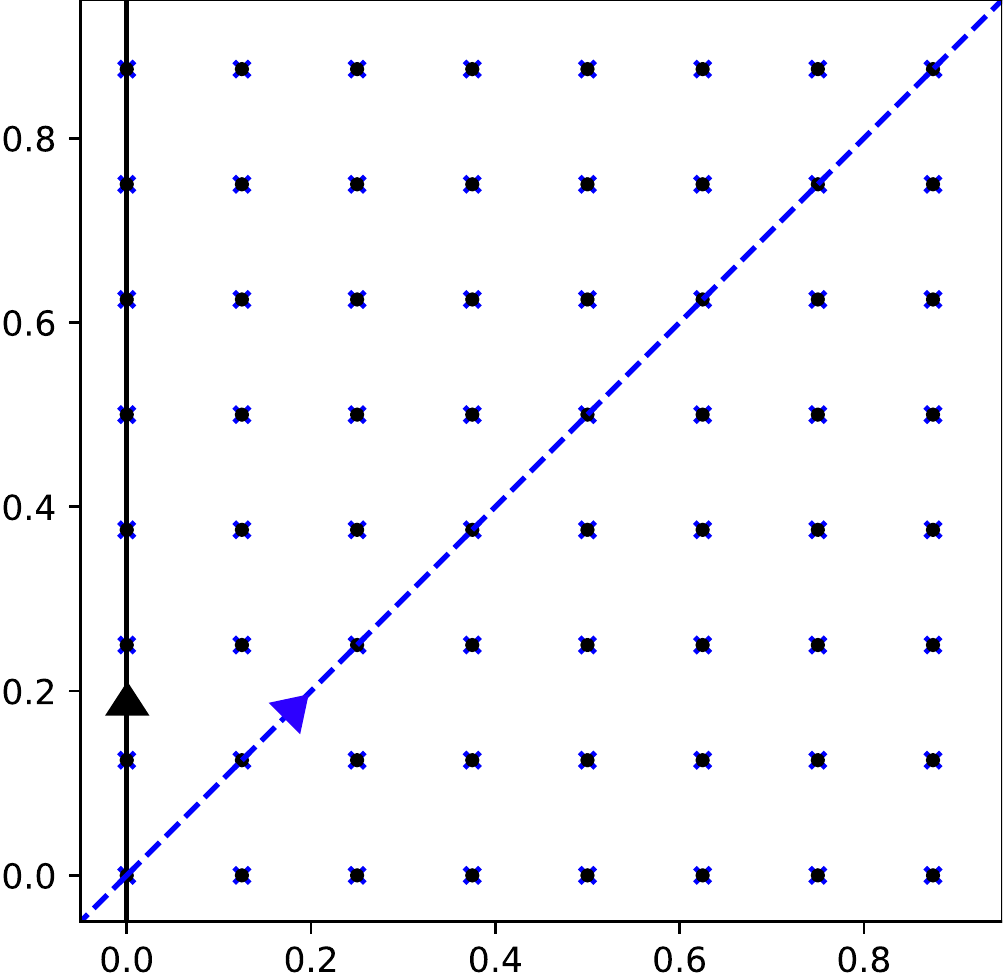}
  	\caption[Text excluding the matrix]{Another generator $\beta$ illustrating $\begin{pmatrix} 0 \\ 1 \\ \end{pmatrix} \mapsto \begin{pmatrix} 1 \\ 1 \\ \end{pmatrix}$. In our numerical experiment, the other generator corresponding to the point in \cref{figure:torus_pd} is given by $505\alpha + \beta$ ($= \frac{1}{2}\alpha + \beta$ with the coefficient $\mathbb{Z}/1009\mathbb{Z}$).}  \label{figure:torus_gen_1}
  \end{minipage}  
  \\
  \centering
  \begin{minipage}[t]{0.48\hsize}
  	\centering
  	\includegraphics[width=\hsize]{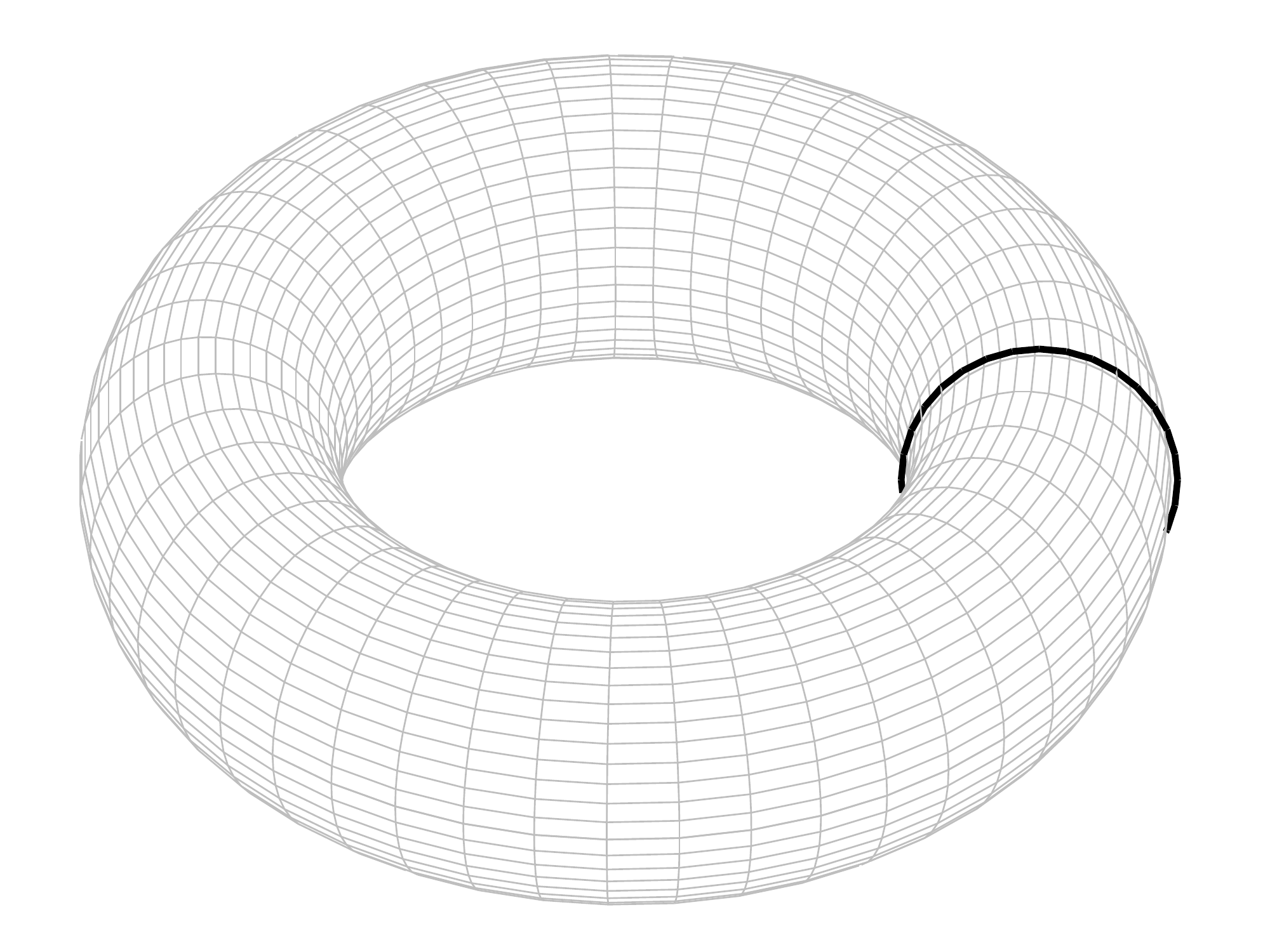}
  	\caption[Text excluding the matrix]{The generator on the domain side embedded in $\mathbb{R}^3$. The loop turns around the torus once in meridian direction.} \label{figure:torus_gen_1_0}
  \end{minipage}
  \begin{minipage}[t]{0.02\hsize}
  \end{minipage}
  \begin{minipage}[t]{0.48\hsize}
  	\centering
  	\includegraphics[width=\hsize]{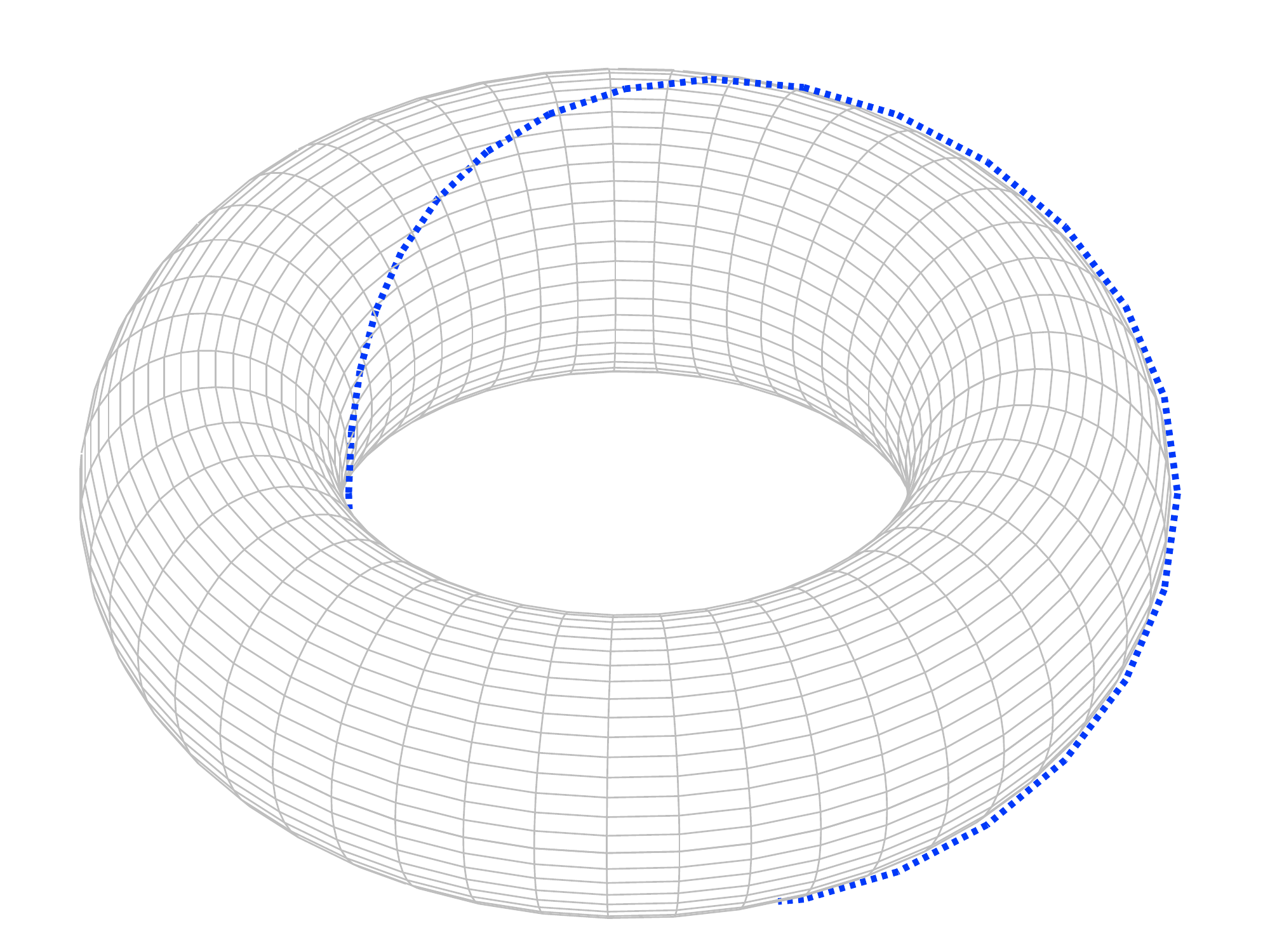}
  	\caption[Text excluding the matrix]{The generator on the image side embedded in $\mathbb{R}^3$. The loop turns around the torus once in longitudinal and once in meridian direction.} \label{figure:torus_gen_1_1}
  \end{minipage}
\end{figure}

\section{Concluding remarks}
In this paper, we defined the persistence diagram of a sampled map and proved that the persistence diagram is uniquely determined and independent of the choice of bases in interval decompositions.
However, the reconstruction of the homology induced map depended on the choice of the bases of interval decomposition.
Our aim is reconstructions of underlying maps, so we must solve the problem on which bases are the best for reconstruction.
This problem will be our future work and would be related to the problem on which cycle is optimal for representing the generator of persistent homology~\cite{optimal_cycle}.

\paragraph{Acknowledgements}
I would like to thank Yasuaki Hiraoka for introducing me to this study, and for many fruitful discussions and stimulating questions.
I would like to thank Emerson G. Escolar for important questions about the well-definedness of the persistence analysis.
I would like to thank Zin Arai for his constructive suggestions and comments on the paper.
I would like to acknowledge the support by Japan Society for the Promotion of Science KAKENHI Grant Number 16J03138, and Japan Science and Technology Agency CREST Mathematics 15656429.


\end{document}